\documentclass[11pt,letterpaper]{amsart}

\usepackage{amssymb, amsfonts, amsmath, amsthm,bm, amscd}
\usepackage[dvipsnames]{xcolor}
\usepackage{mathrsfs}
\usepackage{graphicx}
\usepackage{caption}
\usepackage{subcaption}
\usepackage{wrapfig}
\usepackage{enumitem, multicol}
\usepackage{tikz}
\usetikzlibrary{cd}
\usepackage{nicefrac, bigints}
\usepackage{array}
\usepackage{stmaryrd}
\usepackage{mathtools} 
\usepackage{extarrows} 
\usepackage{amsbsy}
\usepackage{bbm}
\usepackage[colorlinks,citecolor=cyan,linkcolor=teal]{hyperref}

%\usepackage{biblatex}
%\addbibresource{references/references_SHSH.bib}
%\bibliographystyle{amsalpha.bst}
%\bibliography{references_SHSH}{}

\theoremstyle{definition}
\newtheorem{theorem}{Theorem}[section]

\newtheorem{definition}[theorem]{Definition}
\newtheorem{lemma}[theorem]{Lemma}

\newtheorem{corollary}[theorem]{Corollary}
\newtheorem{proposition}[theorem]{Proposition}
\newtheorem{remark}[theorem]{Remark}

\newtheorem{claim}[theorem]{Claim}

\newtheorem{conjecture}[theorem]{Conjecture}

\newtheorem*{notation}{Notation}

\makeatletter
\DeclareRobustCommand{\cev}[1]{%
  {\mathpalette\do@cev{#1}}%
}
\newcommand{\do@cev}[2]{%
  \vbox{\offinterlineskip
    \sbox\z@{$\m@th#1 x$}%
    \ialign{##\cr
      \hidewidth\reflectbox{$\m@th#1\vec{}\mkern4mu$}\hidewidth\cr
      \noalign{\kern-\ht\z@}
      $\m@th#1#2$\cr
    }%
  }%
}
\makeatother

\newcommand{\ZZ}{\mathbb{Z}}
\newcommand{\RR}{\mathbb{R}}
\newcommand{\QQ}{\mathbb{Q}}
\newcommand{\CC}{\mathbb{C}}

\newcommand{\inverse}{^{-1}}

\newcommand{\cN}{\mathcal{N}}

\newcommand{\cP}{\mathcal{P}}

\DeclareMathOperator{\Mod}{Mod}

\newcommand{\MF}{\mathcal{MF}}

\newcommand{\ML}{\mathcal{ML}}

\newcommand{\GL}{\mathsf{GL}}

\newcommand{\SL}{\mathsf{SL}}
\DeclareMathOperator{\im}{im}

\DeclareMathOperator{\vol}{vol}

\newcommand{\T}{\mathcal{T}}
\newcommand{\M}{\mathcal{M}}

\newcommand{\cH}{\mathcal{H}}

\newcommand{\PT}{\mathcal{PT}}
\newcommand{\PM}{\mathcal{PM}}
\newcommand{\PoT}{\mathcal{P}^1\mathcal{T}}
\newcommand{\PoM}{\mathcal{P}^1\mathcal{M}}
\newcommand{\QT}{\mathcal{QT}}
\newcommand{\QM}{\mathcal{QM}}
\newcommand{\QoT}{\mathcal{Q}^1\mathcal{T}}
\newcommand{\QoM}{\mathcal{Q}^1\mathcal{M}}
\newcommand{\sing}{\underline{\kappa}}

\DeclareMathOperator{\Sp}{\mathsf{Sp}}

\DeclareMathOperator{\PSL}{\mathsf{PSL}}

\DeclareMathOperator{\Area}{Area}

\newcommand{\sigl}{\sigma_\lambda}

\newcommand{\cO}{\mathcal{O}}

\DeclareMathOperator{\Ol}{\mathcal{O}_\lambda}

\newcommand{\arc}{\underline{\alpha}}

\DeclareMathOperator{\Stab}{Stab}

\DeclareMathOperator{\Eq}{Eq}

\DeclareMathOperator{\WP}{WP}
\DeclareMathOperator{\Th}{Th}

\renewcommand{\Re}{\operatorname{Re}}
\renewcommand{\Im}{\operatorname{Im}}

\newcommand{\TT}{\mathbb{T}}

\newcommand{\cQ}{\mathcal{Q}}

\newcommand{\cL}{\mathcal L}
\DeclareMathOperator{\hol}{hol}

%The following gives us a way to put notes in!

\colorlet{lgray}{gray!40}

\DeclareMathOperator{\Dil}{Dil}

\DeclareMathOperator{\odd}{odd}
\DeclareMathOperator{\Mirz}{Mirz}

\newcommand{\AIS}{\mathcal L}

\newcommand{\TRG}{\mathcal{TRG}}

\newcommand{\rg}{\mathsf{Y}}

\newcommand{\singodd}{\sing_{\odd}}

\newcommand{\bfl}{\boldsymbol\ell}
\newcommand{\bfh}{\mathbf{h}}

\newcommand{\Addresses}{{
  \bigskip
  \footnotesize
  \noindent Aaron Calderon, \textsc{Department of Mathematics, University of Chicago}\par\nopagebreak
  \textit{E-mail address}: \texttt{aaroncalderon@uchicago.edu}
  
  \noindent James Farre, \textsc{Max Planck Institute for Mathematics in the Sciences, Leipzig}\par\nopagebreak
  \textit{E-mail address}: \texttt{james.farre@mis.mpg.de}
  }}

\begin{document}

\title{On Mirzakhani's twist torus conjecture}
\author{Aaron Calderon}
\author{James Farre}

\setcounter{tocdepth}{1}

\maketitle
\vspace{-5ex}
\thispagestyle{empty}

\begin{abstract}
We address a conjecture of Mirzakhani about the statistical behavior of certain expanding families of ``twist tori'' in the moduli space of hyperbolic surfaces, showing that they equidistribute to a certain Lebesgue-class measure along almost all sequences.
We also identify a number of other expanding families of twist tori whose limiting distributions are mutually singular to Lebesgue.
\end{abstract}

\section{Introduction}\label{sec:intro}

Let $\M_g$ be the moduli space of hyperbolic structures on a closed, oriented surface $S$ of genus $g\ge2$.
Let $\gamma$ be a \textbf{pants decomposition} of $S$, i.e., a maximal collection of non-isotopic, pairwise disjoint, essential, simple closed curves.
For any $L>0$, there is a \textbf{twist torus} $\TT_\gamma(L) \subset \M_g$ consisting of those hyperbolic surfaces glued together from pairs of pants with all boundary components of length $L$ along a pants decomposition of topological type $\gamma$.
The gluing/twist parameters take values in a torus $(\RR/L\ZZ)^{3g-3}$, as one full twist about any curve in $\gamma$ yields an isometric hyperbolic surface.
The twist torus $\TT_\gamma(L)$ is an immersed finite quotient\footnote{As $\gamma$ may have a  non-trivial finite symmetry group.} of $(\RR/L\ZZ)^{3g-3}$, which we equip with the pushforward $\tau_\gamma(L)$ of the Lebesgue measure on $(\RR/L\ZZ)^{3g-3}$.
See \S\ref{sec: hyp twist tori} for a more detailed discussion.

Mirzakhani conjectured\footnote{See \cite[Problem 13.2]{Wright_Mirz}.} that as $L \to \infty$, these  tori equidistribute in $\M_g$ to a certain  measure in the class of Lebesgue.
See \S\ref{sec:background} for definitions of the function $B(X)$, the constant $b_g$, and the measure $\vol_{\WP}$.

\begin{conjecture}[Mirzakhani's twist torus conjecture]\label{conj:twist tori}
Let $\gamma$ be any pants decomposition. Then
\[ \lim_{L \to \infty} \frac{\tau_\gamma(L)}{L^{3g-3}} = \frac{B(X )}{b_g} d \vol_{\WP}(X)\]
with respect to the weak-$\ast$ topology on measures on $\M_g$.
\end{conjecture}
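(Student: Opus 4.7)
The plan is to verify weak-$\ast$ convergence against test functions $f \in C_c(\M_g)$, i.e., to establish
\[
\frac{1}{L^{3g-3}}\int f \, d\tau_\gamma(L) \;\longrightarrow\; \frac{1}{b_g}\int_{\M_g} f(X)\, B(X) \, d\vol_{\WP}(X)
\]
as $L \to \infty$. I would begin by identifying $\TT_\gamma(L)$ as a compact orbit of the commuting family of Fenchel--Nielsen twist flows along $\gamma_1,\dots,\gamma_{3g-3}$: these are earthquakes along weighted simple closed curves, so the twist torus is an orbit of an $\RR^{3g-3}$-action by earthquake flows, and $\tau_\gamma(L)$ is the natural Haar-pushforward measure on this orbit.

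The main structural input is Wolpert's formula $d\vol_{\WP} = \prod d\ell_{\gamma_i} \wedge d\tau_{\gamma_i}$ in Fenchel--Nielsen coordinates, which gives the slicing identity
\[
\int_{\M_g} f \, d\vol_{\WP} = \frac{1}{|\mathrm{Sym}(\gamma)|}\int_{\RR_+^{3g-3}}\!\!\left(\int f \, d\tau_\gamma(L_1,\dots,L_{3g-3})\right) dL_1\cdots dL_{3g-3},
\]
extending the notation $\tau_\gamma(L)$ to allow distinct boundary lengths. Combined with Mirzakhani's integration formula applied to bump functions in the $\ell_{\gamma_i}$'s, this reduces the conjecture to an asymptotic statement for $L^{-(3g-3)}\int f\, d\tau_\gamma(L)$ along the diagonal $L_1 = \dots = L_{3g-3} = L$.

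To identify the limit, I would interpret $B(X)\, d\vol_{\WP}/b_g$ via Mirzakhani's description of the Thurston measure on $\ML(S)$: the weight $B(X)$ is the Thurston volume of the unit ball of length functions at $X$, and $b_g = \int B \, d\vol_{\WP}$ normalizes to a probability measure. I would then try to show that any weak-$\ast$ limit of $\tau_\gamma(L)/L^{3g-3}$ is invariant under the full earthquake flow on $\M_g$, and invoke a Ratner-type classification (in the spirit of Mirzakhani's work on the earthquake flow and its extensions by Lindenstrauss--Mirzakhani) of such invariant measures to force the limit to equal $B\, d\vol_{\WP}/b_g$.

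The hard part will be establishing convergence along \emph{every} sequence $L \to \infty$ rather than only along almost every sequence. The slicing identity controls only integrated averages over $L$, so upgrading to pointwise convergence requires either (i) a quantitative equidistribution theorem for closed earthquake orbits in the non-homogeneous space $\M_g$---of a strength beyond what is currently available outside homogeneous settings---or (ii) a uniform equicontinuity estimate forcing $\tau_\gamma(L)/L^{3g-3}$ and $\tau_\gamma(L')/L'^{3g-3}$ to be close whenever $L,L'$ are comparable, so that a Tauberian argument promotes the averaged statement to the pointwise one. Absent such an input, the natural fallback---and the unconditional theorem proved in this paper---is convergence along almost every sequence.
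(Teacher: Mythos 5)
Your proposal correctly anticipates the shape of the final result (convergence only outside a zero-density set of times), but the engine you propose for identifying the limit does not exist, and the gap is fatal. The step ``show any weak-$\ast$ limit is earthquake-flow invariant and invoke a Ratner-type classification of such measures to force the limit to equal $B\,d\vol_{\WP}/b_g$'' has no available input: there is no classification of earthquake-flow--invariant measures on $\PoM_g$ (Lindenstrauss--Mirzakhani classify $\Mod_g$-invariant measures on $\ML_g$, a different object), and invariance under the single twist/earthquake direction is far too weak to pin down the limit. Worse, your argument uses nothing about the specific geometry of the tori in the conjecture (all boundary lengths equal, so no pair of pants has property $(\nabla)$); if it worked it would equally apply to expanding twist tori with a $(\nabla)$-component, for which Theorem \ref{mainthm}(2) shows the limit is a \emph{different} earthquake-invariant ergodic measure, mutually singular to $\mu_{\Mirz}/b_g$. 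The classification that actually exists is Eskin--Mirzakhani's theorem for $P$-invariant measures on strata of quadratic differentials, and to use it one must (i) transport the problem to $\QoM_g$ via the measurable conjugacy $\cO$ taking earthquake flow to horocycle flow (with the continuity statement of Theorem \ref{thm: pull back equidistribution} to pull equidistribution back), and (ii) identify the $P$-orbit closure of the resulting flat twist torus, which requires the Apisa--Wright high-rank classification together with cylinder-deformation and homology arguments; this is exactly where the absence of $(\nabla)$ is used to land in the principal stratum.

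Two further problems. First, the Wolpert slicing identity integrates $\tau_\gamma(\bfl)$ over \emph{all} length vectors and recovers $\vol_{\WP}$ with no $B(X)$ weight; restricting to the diagonal $L_1=\cdots=L_{3g-3}=L$ and extracting an asymptotic there is not a reduction but a restatement of the conjecture, and it does not explain why the density $B(X)$ appears (it comes from the mass of the fiberwise Thurston measures once one works on $\PoM_g$, not from Fenchel--Nielsen slicing on $\M_g$). Second, a weak-$\ast$ limit of geodesic-flow pushforwards $(g_t)_*\nu$ along a subsequence is $U$-invariant but not a priori $g_t$-invariant, so even granting the $P$-invariant classification one cannot apply it directly to subsequential limits; the paper first takes Ces\`aro averages in $t$ (Proposition \ref{prop:avgtoriequid}, which produces a $P$-invariant limit via Theorem \ref{thm:EMM}) and then removes the average using Forni's theorem (Theorem \ref{thm:Forni}), which is the precise source of the zero-density exceptional set you correctly guessed must appear. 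Tightness (no escape of mass) also needs justification and is supplied by the EMM genericity statement, not by soft invariance considerations.
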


This should be compared with a classical result regarding the equidistribution of expanding closed horocycles in the unit tangent bundle $\SL_2\ZZ \backslash \SL_2\RR = T^1Y$ of the modular curve $Y$.
Denote by $\Lambda(L)$ the Lebesgue measure on the (unique) closed orbit of the unstable horocycle flow of period $L$.
Then $\Lambda(L)/L$ converges weak-$*$ as $L\to \infty$ to the Liouville measure on $T^1Y$, normalized to have unit volume \cite{Sarnak}; see \cite[\S II]{EskMcM} for a proof using ideas introduced in Margulis's thesis \cite{Margulisthesis, Margulis:thesisEnglish} and mixing of the geodesic flow \cite{Hedlund}.
Using the projection $p: T^1Y\to Y$, one obtains an equidistribution result of the form 
\[\lim_{L \to \infty} \frac{p_* \Lambda(L)}{L}  = \frac{2\pi}{\vol(T^1Y)}d\Area_Y,
\]
where $d\Area_Y$ is the hyperbolic area measure on $Y$.
\medskip

Conjecture \ref{conj:twist tori} also has a refinement in terms of the equidistribution of expanding orbits of a flow.
Using geodesic length functions, Thurston identified the unit cotangent bundle of $\M_g$ with $\PoM_g$, the bundle of unit length measured geodesic laminations over $\M_g$ (see \S\ref{sec:background}).
This bundle carries the unipotent-like {\bf earthquake flow}: given a hyperbolic surface $X$ and a measured lamination $\lambda$, the earthquake flow acts by cutting open $X$ along $\lambda$, shearing along $\lambda$ according to its measure, and then regluing.
There is an earthquake flow--invariant ergodic measure $\mu_{\Mirz}$ on $\PoM_g$ in the class of Lebesgue, which satisfies
\[dp_* \mu_{\Mirz}(X) = {B(X)}~d\vol_{\WP}(X),\]
where $p: \PoM_g\to \M_g$ forgets the measured lamination. Its mass is the constant $b_g$.
Recent results have tied the ergodic theory of the earthquake flow with respect to $\mu_{\Mirz}$ to questions about counting hyperbolic geodesics \cite{Mirz_horo, AH_compcount, Liu_horo, spine}.

Twist tori lift to $\PoM_g$ by keeping track of the pants decomposition $\gamma$ weighted by $1/(3g-3)L$.
Evidently, these lifted twist tori are invariant under the earthquake flow.
Conjecture \ref{conj:twist tori} would therefore follow from the stronger statement that the normalized Lebesgue measures on lifted twist tori equidistribute to $\mu_{\Mirz}/b_g$ in $\PoM_g$ as $L \to \infty$.
\medskip

In this paper, we address this stronger version as well as the following generalization.
Fix a pants decomposition $\gamma = \gamma_1 \cup \ldots \cup \gamma_{3g-3}$, a length vector $\bfl \in \RR_{>0}^{3g-3}$, and a weight vector $\bfh \in \RR^{3g-3}_{>0}$ such that $\bfl \cdot \bfh = 1$.
Denote by $\bfh \gamma$ the measured lamination $h_1 \gamma_1 \cup \ldots \cup h_{3g-3}\gamma_{3g-3}$.
Setting the length of each $\gamma_i$ equal to $\ell_i$ and taking arbitrary twists yields an immersion of a (finite quotient of a) torus into $\M_g$ as before, and we can lift this this to a map
\[(\RR / \ell_1 \ZZ) \times \ldots \times (\RR / \ell_{3g-3} \ZZ)
\looparrowright \PoM_g\]
by specifying the lamination coordinate to be $\bfh \gamma$.
Let $\cP\TT_\gamma(\bfl, \bfh) \subset \PoM_g$ denote the image of this map and let $\mu_{\gamma}(\bfl, \bfh)$ be the pushforward of Lebesgue measure, normalized to have unit mass.
We observe that for almost every choice of weights $\bfh$, the measure $\mu_{\gamma}(\bfl, \bfh)$ is ergodic for the (translation) action of the earthquake flow on $\TT_\gamma(\bfl, \bfh)$.
Again, see \S\ref{sec: hyp twist tori} for the precise constructions.

Say that a pair of pants has property $(\nabla)$\label{a+b=c} if one of its boundary lengths is the sum of the other two.  Pick $(X,\bfh\gamma) \in \cP\TT_\gamma(\bfl, \bfh)$, and let  $\nabla_\gamma(\bfl)$ denote the number of components of $X\setminus \gamma$ that have $(\nabla)$; this number does not depend on the choice of $X$ or $\bfh$.

\begin{theorem}\label{mainthm}
Fix any pants decomposition $\gamma$, any $\bfl \in \RR^{3g-3}_{>0}$, and any $\bfh \in \RR^{3g-3}_{>0}$ such that $\bfl \cdot \bfh = 1$.
There is an earthquake flow invariant probability measure $\mu_\infty$ on $\PoM_g$ and a set $Z \subset \RR$ of zero density such that 
\[ \lim_{\substack{t \to \infty \\ t \notin Z}}
\mu_\gamma(e^t \bfl, e^{-t} \bfh) = \mu_\infty.\]
The measure $\mu_\infty$ arises from the Masur--Smillie--Veech measure on a component of a stratum of quadratic differentials having $4g-4 - 2\nabla_\gamma(\bfl)$ simple zeros and  $\nabla_\gamma(\bfl)$ zeros of order $2$. In particular,  
\begin{enumerate}
\item if $\nabla_\gamma(\bfl) = 0$, then $\mu_\infty$ is $\mu_{\Mirz}/b_g$.
\item if $\nabla_\gamma(\bfl)>0$, then $\mu_\infty$ is not equal (hence singular) to $\mu_{\Mirz}/b_g$.
\end{enumerate}
Consequently, Conjecture \ref{conj:twist tori} is true outside a set of times $e^Z$ where $Z$ has zero density.
\end{theorem}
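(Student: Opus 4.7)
The plan is to translate the problem to the setting of holomorphic quadratic differentials via a conjugacy between the earthquake flow on $\PoM_g$ and a horocycle-type flow on $\QoM_g$, following Mirzakhani and subsequent refinements. Under this conjugacy, the twist torus $\cP\TT_\gamma(\bfl, \bfh)$ should correspond to a closed orbit of the horocycle flow supported on the locus of Jenkins--Strebel differentials whose horizontal cylinder decomposition is of topological type $\gamma$, with cylinder circumferences $\bfl$ and heights $\bfh$. The rescaling $(\bfl, \bfh) \mapsto (e^t \bfl, e^{-t}\bfh)$ then corresponds to applying the Teichm\"uller geodesic flow $g_t$, which scales horizontal holonomies by $e^t$ and vertical holonomies by $e^{-t}$. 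Thus the equidistribution of $\mu_\gamma(e^t \bfl, e^{-t} \bfh)$ reduces to the equidistribution of $g_t$-translates of a single closed horocycle orbit in an appropriate stratum.

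For equidistribution along a density-one set of times, I would follow the thickening and mixing paradigm of Eskin--McMullen, adapted to strata: thicken the closed orbit by a short segment of $g_t$, pair with continuous compactly supported test functions, and apply mixing of $g_t$ with respect to the canonical affine measure on the smallest $\SL_2(\RR)$-invariant submanifold containing the orbit, combined with a Cauchy--Schwarz variance estimate to extract convergence along a density-one set of $t$. This would produce the density-zero exceptional set $Z$, and would identify the limit as the Eskin--Mirzakhani--Mohammadi canonical affine measure on the orbit closure, which pulls back under the conjugacy to an earthquake-invariant ergodic probability measure $\mu_\infty$ on $\PoM_g$. The final claim about Conjecture \ref{conj:twist tori} then follows from case (1) by pushing forward to $\M_g$ and using $dp_*\mu_{\Mirz}(X) = B(X)\, d\vol_{\WP}(X)$.

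It remains to identify the smallest $\SL_2(\RR)$-invariant affine submanifold containing the closed horocycle orbit. When no pair of pants of $S \setminus \gamma$ has $(\nabla)$, a generic such differential has no horizontal saddle connection relations beyond those forced by the cylinder decomposition; a cylinder-deformation and dimension argument in the spirit of Wright should identify this submanifold with the ambient principal stratum, so $\mu_\infty$ pulls back to $\mu_{\Mirz}/b_g$. When some pair of pants has boundary lengths satisfying $c = a + b$, by contrast, the corresponding flat pants region has degenerate seams, producing a nontrivial $\SL_2(\RR)$-invariant linear relation among horizontal saddle connection holonomies; the orbit is thereby confined to a proper affine subvariety and $\mu_\infty$ is singular to $\mu_{\Mirz}/b_g$. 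The principal obstacles are ensuring that the earthquake--horocycle conjugacy is defined and well-behaved on the measure-zero twist tori and their $g_t$-thickenings (so the reduction to flat geometry is valid on the objects at hand), and carrying out the $(\nabla)$ classification carefully enough to verify that the seam-coincidence really produces a nontrivial $\SL_2(\RR)$-invariant constraint, rather than a coincidence that can be resolved by an $\SL_2(\RR)$-deformation.
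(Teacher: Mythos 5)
Your overall translation---conjugate by $\cO$ to $\QoM_g$, observe that rescaling $(\bfl,\bfh)\mapsto(e^t\bfl,e^{-t}\bfh)$ is the Teichm\"uller geodesic flow, identify a $P$-orbit closure, pull back---matches the paper's strategy. But the two central steps in your plan diverge from what works, and in ways that matter.

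First, the Eskin--McMullen thickening-and-mixing paradigm is not available here. That argument requires thickening a \emph{full horospherical leaf} (codimension $1$ complement in the leaf + geodesic + stable directions) into an open set, whose $g_t$-translates then equidistribute by mixing. Twist tori are only $(3g-3)$-dimensional inside the $(6g-7)$-dimensional unstable leaf; thickening in the missing $3g-4$ unstable directions destroys the object you are trying to study, since those directions are expanded, not contracted, by $g_t$. Mixing of $g_t$ on the orbit closure also cannot distinguish between the many $P$-invariant ergodic measures that live in a stratum. The paper instead invokes the Eskin--Mirzakhani--Mohammadi genericity theorem (a Ratner-type result, much stronger than mixing) to show that the time-averaged pushforward $\frac{1}{T}\int_0^T (g_t)_*\nu_\gamma\,dt$ converges to the affine measure on the $P$-orbit closure, and then applies Forni's theorem to remove the time average outside a density-zero set $Z$. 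There is a nontrivial lemma hidden in the genericity step: EMM's result is pointwise, and one must show that the set of points of the torus whose $P$-orbit closure is smaller than $\overline{P\cdot\cQ\TT}$ is a countable union of proper subtori, hence null. Your Cauchy--Schwarz variance idea doesn't land on firm ground without something like this.

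Second, identifying the orbit closure as the full ambient stratum cannot follow from a dimension count alone. Cylinder deformations give a \emph{lower bound} on rank (the homology span of the lifted cylinder cores is isotropic in $H_1(\widehat S)^-$, so rank is at least its dimension), but upgrading a rank bound to ``orbit closure equals stratum'' requires the Apisa--Wright classification of high-rank affine invariant subvarieties, a recent and deep theorem (plus Mirzakhani--Wright's rank-$g$ classification when everything is a square of an abelian differential). The bound also requires a nontrivial topological computation (co-orientability of the arc system, joint orientability of $(\gamma,\arc)$, and a Mayer--Vietoris argument to count relations among the $\widehat\gamma_i$), not just a naive saddle-connection count.

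Finally, your picture of case (2) is off. Property $(\nabla)$ does not confine the torus to a proper subvariety of the principal stratum via a ``degenerate seam relation.'' A pair of pants with $a+b=c$ has a spine with a single $4$-valent vertex rather than two trivalent vertices, so under $\cO$ the twist torus lies in a \emph{different, smaller stratum} $\QoM_g(1^{4g-4-2\nabla},2^{\nabla};\pm 1)$ with some double zeros, not in the principal stratum. In favorable cases the orbit closure is all of that smaller stratum and the limit is the Masur--Smillie--Veech measure there; singularity to $\mu_{\Mirz}/b_g$ then follows simply because the supports lie over different strata.
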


We emphasize that Mirzakhani's conjecture only references $\mu_{\Mirz}$; it seems to have been unexpected that families of twist tori could equidistribute to anything else.
See Theorem \ref{thm: mainthm} for a detailed, precise formulation of the limiting distributions that appear in Theorem \ref{mainthm}.
The generic pair $(X, \lambda)$ with respect to one of these measures is a hyperbolic surface equipped with a measured lamination that cuts it into a union of regular ideal polygons \cite[Corollary 1.2]{shshI}.

The following averaged version of Conjecture \ref{conj:twist tori} is an immediate consequence (just push down to $\M_g$):
\[\frac{1}{T} \int_0^T \frac{\tau_{\gamma}(e^t)}{e^{(3g-3)t}}~dt \to \frac{B(X )}{b_g} d \vol_{\WP}(X).\]

The strategy of our proof is to exploit a deep connection between the earthquake and horocycle flows discovered by Mirzakhani \cite{MirzEQ} and further developed by the authors in \cite{shshI, shshII}.
Briefly, there is a bijective Borel isomorphism $\cO: \PoM_g \to \QoM_g$ that takes the earthquake flow to the horocycle flow on the moduli space $\QoM_g$ of unit area quadratic differentials in a time-preserving way.
While $\cO$ is not continuous, it is continuous along the leaves of a certain foliation and each $\TT_\gamma(\bfl,\bfh)$ is contained in such a leaf.
This allows us to map the twist torus via $\cO$ to get a torus in $\QoM_g$ consisting of flat surfaces glued together from horizontal cylinders corresponding to the components of $\gamma$ with heights $\bfh$ and lengths $\bfl$; see Figure \ref{fig:Otorus}. 
Once in this setting, we can use powerful results from Teichm{\"u}ller dynamics to understand the limiting distribution of expanding twist tori in $\QoM_g$.

\begin{figure}[ht]
    \centering
    \includegraphics[scale=.5]{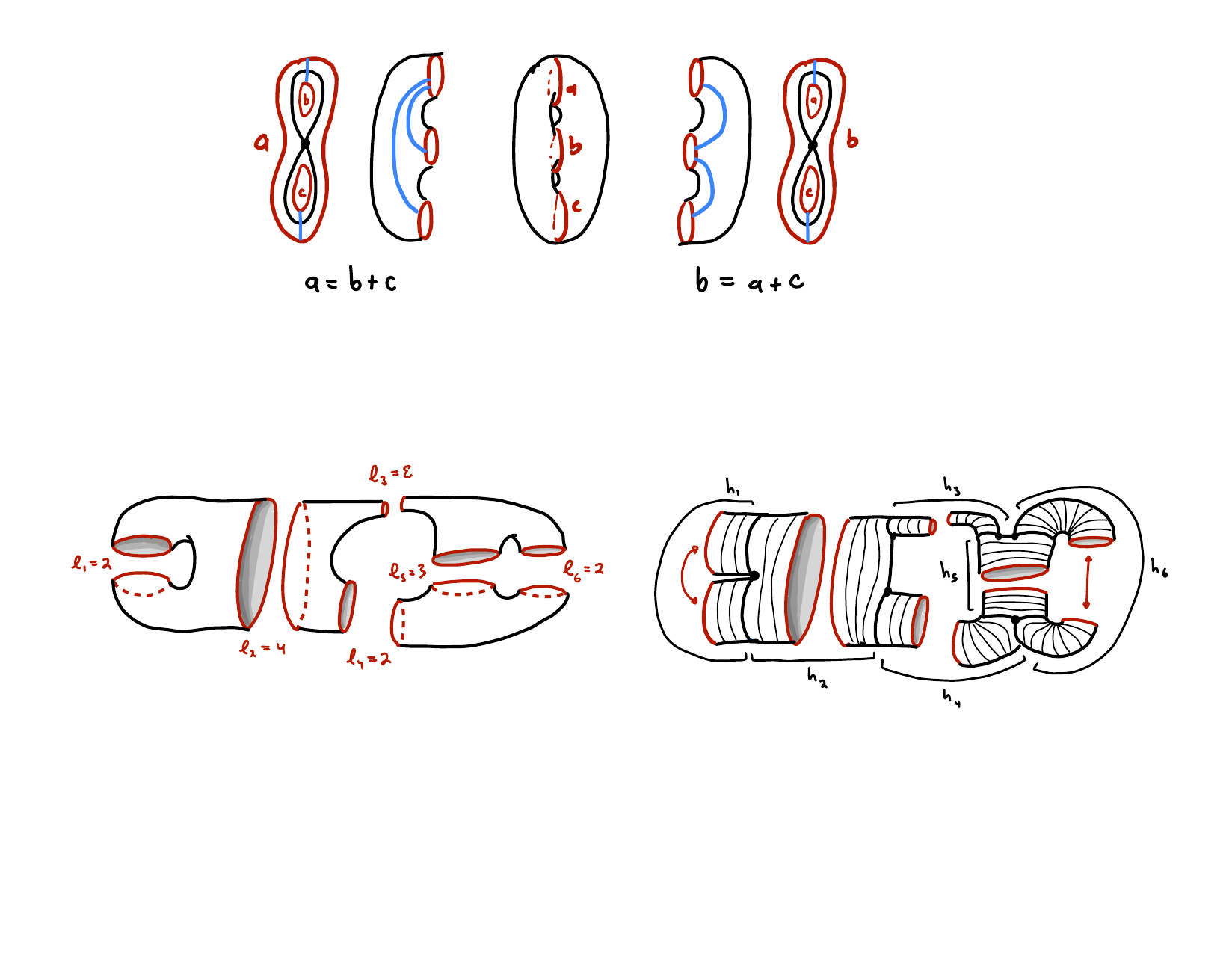}
    \caption{Hyperbolic and flat twist tori. These tori are mapped to each other under $\cO$.}
    \label{fig:Otorus}
\end{figure}

The driving engine of our proof is then the following theorem, which allows us to pull back equidistribution statements from $\QoM_g$ to $\PoM_g$.
This is a special case of \cite[Theorem B]{shshII}.

\begin{theorem}\label{thm: pull back equidistribution}
Suppose that $\nu_n \to \nu$ is a convergent sequence of probability measures on a stratum of $\QoM_g$, and suppose that $\nu$ is a $\PSL_2\RR$--invariant probability measure.
Then $\cO^* \nu_n \to \cO^* \nu$ on $\PoM_g$.
\end{theorem}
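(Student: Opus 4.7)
The plan is to reduce the theorem to an almost-everywhere continuity statement for $\cO^{-1}$ and then invoke the continuous mapping theorem for weak convergence. For each bounded continuous $g \in C_b(\PoM_g)$, showing $\cO^*\nu_n \to \cO^*\nu$ amounts to verifying
\[
\int_{\QoM_g} g \circ \cO^{-1}\, d\nu_n \longrightarrow \int_{\QoM_g} g \circ \cO^{-1}\, d\nu.
\]
The integrand is bounded and Borel but not a priori continuous, so this does not follow directly from $\nu_n \to \nu$. However, the portmanteau theorem (in its bounded-Borel form) yields the convergence above provided $g \circ \cO^{-1}$ is continuous at $\nu$-almost every point of $\QoM_g$. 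Thus it suffices to show that the discontinuity set $D \subset \QoM_g$ of the Borel isomorphism $\cO^{-1}$ satisfies $\nu(D) = 0$ for every $\PSL_2\RR$-invariant probability measure $\nu$.

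Next I would identify $D$ using the leafwise structure of $\cO$ recalled in the excerpt. Since $\cO$ restricts to a homeomorphism on each leaf of the foliation of $\PoM_g$ by constant-measured-lamination leaves, and this foliation is carried by $\cO$ to the foliation of the stratum by constant-horizontal-foliation leaves, the only way for $\cO^{-1}$ to be discontinuous at a flat surface $q$ is for a perturbation of $q$ transverse to its horizontal foliation to produce a combinatorial change in the associated orthogeodesic/lamination data. This places $D$ inside a countable union of proper horocycle-invariant real-algebraic subvarieties of the stratum, together with the closures of its lower-dimensional boundary strata.

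Finally I would invoke the rigidity theorem of Eskin--Mirzakhani--Mohammadi: every $\PSL_2\RR$-invariant probability measure $\nu$ on a stratum is affine, supported on an affine invariant submanifold $\cN$ and of Lebesgue class there. In particular, $\nu$ assigns zero mass to any proper real-algebraic subvariety of $\cN$ that is invariant under the horocycle flow; hence $\nu(D) = 0$, completing the argument via the portmanteau step above.

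The main obstacle is making the structural description of $D$ precise: one must verify that the discontinuity locus of $\cO^{-1}$ is genuinely contained in a countable union of proper invariant subvarieties, rather than a more pathological Borel set whose intersection with $\cN$ could have positive measure. Establishing this is the technical core of the cited work \cite{shshII}, and requires a careful analysis of how the orthogeodesic foliation associated to a hyperbolic surface--lamination pair deforms as one perturbs the quadratic differential transversally to its horizontal foliation, together with a control on the combinatorial types of horizontal saddle connections and cylinder decompositions that can appear in the support of $\nu$.
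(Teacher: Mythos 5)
Your overall strategy coincides with the paper's: reduce weak-$*$ convergence of $\cO^*\nu_n$ to $\nu$-almost-everywhere continuity of $\cO^{-1}$ and then invoke a portmanteau/Billingsley-type theorem (the paper cites \cite[Theorem 5.1]{B:measures} for this step). The point of divergence, and the place where your argument has a genuine gap, is in the characterization of the discontinuity set $D$ and the reason $\nu(D)=0$.

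You bound $D$ by ``a countable union of proper horocycle-invariant real-algebraic subvarieties'' and then appeal to Eskin--Mirzakhani--Mohammadi to conclude $\nu(D)=0$. This appeal does not close the argument: the affine invariant subvariety $\cN$ supporting $\nu$ is itself $\PSL_2\RR$-invariant, hence a fortiori horocycle-invariant, so horocycle-invariance alone never rules out that the entirety of $\cN$ might sit inside $D$. You would need to know that each piece of $D$ meets $\cN$ in a \emph{proper} subvariety of $\cN$, and this is precisely what you have not established. Your own final paragraph acknowledges this, but the issue is not merely ``making the description of $D$ precise'' --- the horocycle-invariance framing is structurally inadequate because it fails to distinguish $D$ from $\cN$.

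The paper's route avoids this. Theorem~\ref{thm:conjugacy}(4) asserts that, on each stratum component $\cQ$, the map $\cO^{-1}|_{\cQ}$ is continuous at every $q$ \emph{without horizontal saddle connections}; so $D\cap\cQ$ lies inside the horizontal-saddle-connection locus $H\subset\cQ$. The crucial feature of $H$ is that it is $U$- and $A$-invariant but \emph{not} rotation-invariant. For any $\PSL_2\RR$-invariant $\nu$, one averages over $r_\theta$: since each $q$ has only countably many saddle-connection directions, Fubini gives $\int_{S^1}\nu(r_\theta H)\,d\theta=0$, hence $\nu(H)=\nu(r_\theta H)=0$. This is how the paper (quoting \cite{shshII}) gets $\nu(D)=0$, and it works uniformly for every AIS, whereas your invariant-subvariety argument would first require knowing $\cN\not\subset D$, which is not free. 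In short: replace ``horocycle-invariant'' by ``has a horizontal saddle connection'' and use rotation-invariance (not affineness) of $\nu$; that is the missing piece.
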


Thus, having proved an equidistribution statement for expanding twist tori in $\QoM_g$, we can simply pull it back to deduce Theorem \ref{mainthm}.
This scheme also works to show the equidistribution of many different families of expanding twist tori where $\gamma$ is an arbitrary multicurve, and can be used to show equidistribution to many different limiting distributions.
See Section \ref{sec:identifyother}, in particular Theorems \ref{thm: plumbingAD}, \ref{thm: plumbingQD},  and \ref{thm:trivalent2g}, the last of which shows that $2g$-dimensional expanding twist tori often equidistribute to Mirzakhani measure.

\begin{remark}
One could also ask about expanding twist tori in $\M_{g,n}$, that is, when the hyperbolic surfaces are allowed to have cusps.
Our methods rely on all of our previous results on the conjugacy $\cO$, which do not yet have analogues in the punctured setting.
\end{remark}

\subsection{Context: unipotent flows on moduli spaces}

There are surprising similarities between the dynamics of groups generated by unipotents on homogeneous spaces and the action of $\PSL_2\RR$ on strata of holomorphic  differentials. 
By way of \cite{MirzEQ} and \cite{shshI,shshII}, parts of this analogy can now be extended to the action (of the group $P$ of upper triangular matrices in $\PSL_2\RR$) by stretchquakes on $\PoM_g$.

A subgroup $U$ of a connected reductive Lie group $G$ is \textbf{horospherical} if there is a $g\in G$ such that 
\[U = \{u \in G : g^n u g^{-n} \to id, ~ n \to \infty\}.\]
For a lattice $\Gamma \le G$ and horospherical subgroup $U\le G$, the $U$-invariant ergodic probability measures and $U$-orbit closures in $\Gamma \backslash G$ were classified by Dani \cite{Dani:orbits}: they are homogeneous.   
In her landmark results, Ratner proved that such rigidity phenomena hold for arbitrary connected subgroups $H\le G$ generated by unipotents \cite{Ratner:measure,Ratner:top}: every $H$-invariant ergodic probability measure and $H$-orbit closure in $\Gamma \backslash G$ is homogeneous.

Despite the seemingly insurmountable difficulty that strata of holomorphic quadratic differentials on higher genus surfaces are completely inhomogeneous, Eskin and Mirzakhani showed that $P$-invariant probability measures are extremely well behaved \cite{EM}, and with Mohammadi they proved that the same is true for orbit closures \cite{EMM} (see \S\ref{sec: AISs} and Theorem \ref{thm:EMM} for a summary of their results).
\medskip

Leveraging the ergodicity of the earthquake flow, Mirzakhani showed that the analogues of expanding horospheres ($L$-level sets of the hyperbolic length of a simple closed curve) equidistribute to $\pi_*\mu_{\Mirz}$ on $\M_g$ as $L \rightarrow \infty$ \cite{Mirz_horo}.
These level sets can be lifted to $\PoM_g$ in a natural way by keeping track of the curve, as in \S\ref{subsec: hyp tt}.
There are finite, earthquake flow--invariant measures on these lifted level sets that are uniformly expanded/contracted 
in forward/backward time by the ``dilation flow,'' the diagonal part of the stretchquake action (see the end of \S\ref{subsec:conj}).
The lifted level sets have the maximum dimension with this property; see also Theorems \ref{thm:ortho homeo} and \ref{thm:conjugacy} that explain that the conjugacy $\cO$ maps lifted level sets to leaves of the unstable foliation for the Teichm\"uller geodesic flow.
These observations justify Mirzakhani's use of the ``horospherical'' terminology in \cite{Mirz_horo} (compare \cite[\S13.3]{Wright_Mirz}).

Twist tori are only $(3g-3)$-dimensional inside of the $(6g-7)$-dimensional expanding horospheres of the previous paragraph; as such, they should not be considered horospherical.  
The much more refined results of Eskin--Mirzkhani--Mohammadi, analogous to Ratner's Theorems, are vital ingredients in our proof of Theorem \ref{mainthm} and the even more delicate examples presented in \S\ref{sec:identifyother}.
Compare Figure \ref{fig:horo}.
\medskip

Mirzakhani's equidistribution results for expanding horospheres were generalized by both Arana-Herrera \cite{AH_horo} (who clarified a technical point in \cite{Mirz_horo}) and Liu \cite{Liu_horo} to different types of length functions associated to multicurves and by Arana-Herrera and the first author \cite[Theorem 5.2]{spine} to the case where one puts restrictions on the geometry of the complementary subsurface.

Our techniques recover and generalize these equidistribution results.
Indeed, let $\gamma = \gamma_1 \cup \ldots \cup \gamma_k$ be any multicurve and $\bfh \in \RR^{k}_{>0}$ any weight vector.
Consider the $L$-level set of the hyperbolic length function for $\bfh \gamma := h_1 \gamma_1 \cup \ldots \cup h_k \gamma_k$ in the Teichm{\"u}ller space $\T_g$.
A measure on this level set is {\bf horospherical} if it is absolutely continuous with respect to the Lebesgue class with continuous density, and if the restriction to each earthquake flow line is Lebesgue measure. 
These level sets can also be stratified by the combinatorial type of the ribbon graph spine for $X \setminus \gamma$ (see \S\ref{sec:background} and \S\S\ref{subsec: arcs and rgs}--\ref{sec:horospheres}).
Fixing a combinatorial type defines a submanifold of the $L$-level set, and any measure which is both absolutely continuous with respect to Lebesgue on such a submanifold and Lebesgue on earthquake flow orbits is called a {\bf horospherical slice measure}.
See Section \ref{sec:horospheres} and Figure \ref{fig:horo}.

Both horospherical and horospherical slice measures admit earthquake-flow invariant lifts to the unit cotangent bundle $\PoT_g$ by setting the second coordinate equal to $\bfh \gamma/L$.
These can further be pushed forward to $\PoM_g$, and we continue to use the same terminology.

\begin{figure}
    \centering
    \includegraphics[scale=.6]{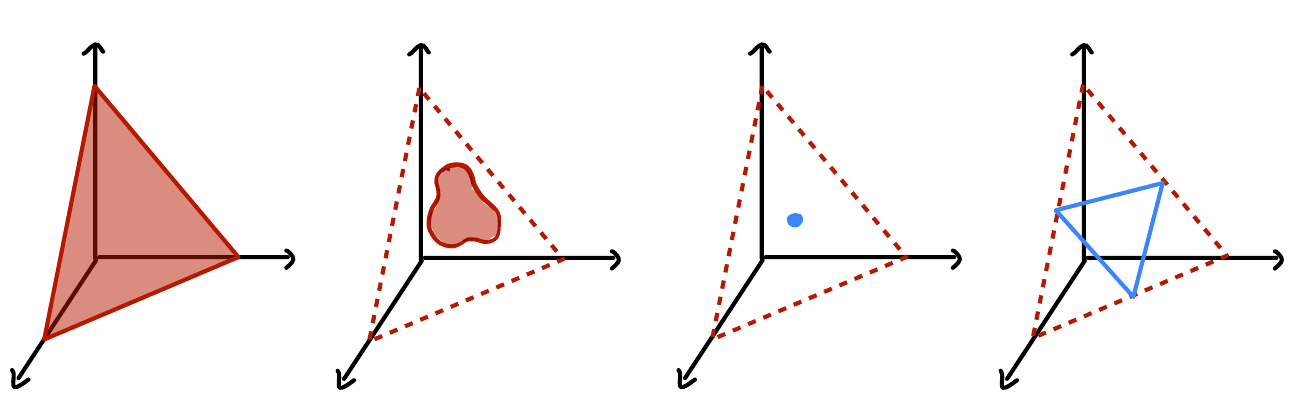}
    \caption{Projections to the length part of Fenchel--Nielsen coordinates. From left to right: the horospheres considered in \cite{Mirz_horo}, the horospheres considered in \cite{spine}, the twist tori of Conjecture \ref{conj:twist tori}, and a horospherical slice measure on a lower-dimensional stratum.}
    \label{fig:horo}
\end{figure}

As the dilation flow $\left( \Dil_t\right)$ normalizes the earthquake flow, pushforwards of horospherical (slice) measures by the dilation flow are themselves horospherical (slice) measures.
Applying a result of Forni \cite[Theorem 1.6]{Forni} (see also \cite[Proposition 3.2]{EMM:effscc}), we arrive at the following:

\begin{theorem}\label{thm: horospheres equi}
Let $\sigma$ be any horospherical (slice) measure on $\PoM_g$. Then 
\[\lim_{t \to \infty} \left( \Dil_t\right)_*\sigma
 = \mu_{\infty}\]
where $\mu_{\infty}$ is an earthquake-flow invariant ergodic probability measure on $\PoM_g$. Moreover,
\begin{enumerate}
    \item If $\sigma$ is horospherical or supported on a top-dimensional stratum, then $\mu_\infty = \mu_{\Mirz}/b_g$.
    \item If $\sigma$ is supported on a lower-dimensional stratum, $\mu_{\infty}$ is not equal (hence singular) to $\mu_{\Mirz}$.
\end{enumerate}
\end{theorem}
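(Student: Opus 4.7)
The strategy is to transport the problem to $\QoM_g$ via the Borel isomorphism $\cO$, apply an equidistribution theorem for the Teichm\"uller geodesic flow expanding pieces of strong unstable manifolds, and then pull the result back using Theorem \ref{thm: pull back equidistribution}. The first step is to understand the $\cO$-image of a horospherical (slice) measure $\sigma$. By Theorems \ref{thm:ortho homeo} and \ref{thm:conjugacy}, $\cO$ maps lifted level sets of hyperbolic length functions to leaves of the strong unstable foliation for the Teichm\"uller flow, and conjugates the earthquake flow to the horocycle flow. Because the dilation flow is the diagonal part of the stretchquake action and $\cO$ intertwines stretchquakes with the $P$-action on $\QoM_g$, the flow $\Dil_t$ corresponds under $\cO$ to the Teichm\"uller geodesic flow $g_t$. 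The combinatorial stratification of horospheres by the ribbon graph type of $X \setminus \gamma$ corresponds to the stratification of $\QoM_g$ by the singularity data of the flat metric: the top combinatorial type lands in the principal stratum, while degenerate types (those with components having property $(\nabla)$ or, more generally, prescribed zero orders) land in proper strata where some cylinder data forces higher-order zeros.

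Having identified $\cO_* \sigma$ as a Lebesgue-class measure supported on a compactly-supported piece of a strong unstable leaf inside some stratum $\cH \subset \QoM_g$, I would invoke Forni's equidistribution theorem \cite{Forni} (or the effective version of Eskin--Mirzakhani--Mohammadi \cite{EMM:effscc}), which says that the pushforward of such a horocyclic/strong-unstable piece measure by $g_t$ converges, as $t \to \infty$, to the Masur--Veech--Smillie measure $\mu_{MV}^\cH$ on $\cH$. (The exact invocation requires that the measure on the strong unstable leaf is absolutely continuous with continuous density, which is built into the definition of horospherical (slice) measures.) By the Eskin--Mirzakhani theorem \cite{EM}, $\mu_{MV}^\cH$ is $P$-invariant, and in fact $\PSL_2\RR$-invariant on the affine invariant subvariety $\cH$. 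Theorem \ref{thm: pull back equidistribution} then applies, yielding
\[(\Dil_t)_* \sigma = \cO^* (g_t)_* \cO_* \sigma \longrightarrow \cO^* \mu_{MV}^\cH =: \mu_\infty\]
as measures on $\PoM_g$. Earthquake-flow invariance and ergodicity of $\mu_\infty$ follow by applying $\cO^*$ to the corresponding properties of $\mu_{MV}^\cH$ under the horocycle flow.

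To identify the limit, I would use that by the results of \cite{shshI, shshII}, $\cO$ intertwines $\mu_{\Mirz}$ with the Masur--Veech measure on the \emph{principal} stratum of $\QoM_g$ (up to the normalizing constant $b_g$). Thus, in case $(1)$, when $\sigma$ is a full horospherical measure or is supported on the top combinatorial type, $\cO_* \sigma$ sits in the principal stratum and $\mu_\infty = \cO^* \mu_{MV}^{\text{prin}} = \mu_{\Mirz}/b_g$. In case $(2)$, when $\sigma$ is supported on a proper substratum of ribbon graph types, $\cO_* \sigma$ is supported in a proper stratum $\cH$ of $\QoM_g$ (one with at least one zero of higher order), and $\mu_\infty$ is supported on the $\mu_{\Mirz}$-null Borel set $\cO^{-1}(\cH)$, hence is singular to $\mu_{\Mirz}$.

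The main obstacle I anticipate is the precise accounting of how the combinatorial stratification of horospheres in $\PoM_g$ lines up with the stratum decomposition of $\QoM_g$, in particular showing that $\cO_*\sigma$ remains a genuine \emph{compactly supported} Lebesgue-class measure on a strong unstable arc/disk (and not, for instance, concentrated on the boundary of $\cH$), so that Forni's theorem applies cleanly. A secondary subtlety is verifying that the affine invariant subvariety that arises as the ambient manifold for $\mu_\infty$ in case $(2)$ is genuinely of positive codimension, so that singularity with $\mu_{\Mirz}$ really does follow from the singularity of Masur--Veech measures on different strata; this should follow from the description of the singularity type in terms of the ribbon graph combinatorics.
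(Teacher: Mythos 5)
Your proposal matches the paper's proof essentially exactly: transport $\sigma$ through $\cO$ to a horospherical measure (in Forni's sense) on a strong unstable leaf of the relevant stratum component, invoke Forni's Theorem 1.6 (or the EMM effective version) to get convergence of $(g_t)_*\cO_*\sigma$ to Masur--Smillie--Veech on that stratum, and pull back via Theorem~\ref{thm: pull back equidistribution}; the identification $\mu_\infty = \mu_{\Mirz}/b_g$ in case (1) and mutual singularity in case (2) both follow as you describe from Theorem~\ref{thm:conjugacy}(2) and the fact that MSV measures on distinct strata are mutually singular. Your two anticipated obstacles are non-issues---Forni's theorem does not require compact support (only absolute continuity with continuous density and Lebesgue conditionals along horocycle orbits), and the slice is by construction in the interior of its stratum---so the argument goes through unchanged.
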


In the second case, the limiting measure arises via the Masur--Smillie--Veech measure on the component of the stratum of $\QoM_g$ corresponding to the horospherical slice; see \S\ref{subsec: individual pushforwards} for the proof.

\subsection*{Acknowledgments}
We are very grateful to Francisco Arana-Herrera for an inspiring discussion at the beginning of this project and to Paul Apisa for explaining his work with Alex Wright to us.
We heartily thank the referee for supplying a more direct proof of the identification of orbit closures in the setting of pants decompositions.
AC was supported by NSF grants
DMS-2005328 %Yair
and DMS-2202703. %MSPRF
JF was supported by
NSF grant DMS-2005328 %MSPRF
and DFG grants 
427903332 %Bea's Emmy Noether
and 281071066 – TRR 191. %SFB 

\section{Flows on moduli spaces}\label{sec:background}

In this section, we discuss basic dynamical features of the spaces $\PoM_g$ and $\QoM_g$ and describe a Borel measurable bijection $\cO$ conjugating their dynamics.
Throughout, $S$ is a closed, oriented surface of genus $g\ge 2$ and $\T_g$ is the Teichm\"uller space of marked hyperbolic or conformal structures on $S$.  The mapping class group of $S$ will be denoted $\Mod_g$, and $\M_g= \T_g/\Mod_g$ is the moduli space.

\subsection{The horocycle flow}\label{subsec:horocycle flow}
Given a Riemann surface $X \in \T_g$, complex deformation theory identifies the cotangent space $T_X^* \T_g$ with the vector space of holomorphic quadratic differentials $Q(X)$ on $X$.
Let $\QT_g$ denote the bundle of non-zero quadratic differentials over $\T_g$ and $\QM_g = \QT_g/\Mod_g$.
By integrating $\pm \sqrt q$, each quadratic differential $q$ defines a flat metric on $q\setminus Z(q)$, where $Z(q)$ is the set of zeros of $q$.
The area is given by $\int |q|$, and we denote by $\QoT_g$ and $\QoM_g$ the unit area loci, which can be identified with the unit cotangent bundles of $\T_g$ and $\M_g$, respectively.

Both $\QT_g$ and $\QM_g$ and their unit area loci are stratified by the number and orders of zeros of the differentials. 
Every partition $\sing = (\kappa_1, ..., \kappa_n)$ of $4g-4$ and $\varepsilon \in \{\pm1\}$ determines a (possibly empty) \textbf{stratum} $\QT_g(\sing;\varepsilon)$ of quadratic differentials that are ($\varepsilon = +1$) or are not ($\varepsilon = -1$) squares of abelian differentials. Equivalently, the completion of the flat metric defined by $q \in \QT_g(\sing;\varepsilon)$ has $n$ cone points, each of which has cone angle excess $\kappa_i \pi$. The rotational holonomy of the metric lives in $\ZZ/2\ZZ$ and is trivial if and only if $\varepsilon = +1$.
We use $\QM_g(\sing; \varepsilon)$ to denote a stratum of $\QM_g$ and add a $1$ superscript to denote unit area loci.
Strata are not necessarily connected, but at least at the level of moduli space their connected components are classified \cite{KZstrata, Lanneau, CMexceptional}.

Strata are complex orbifolds of dimension $(2g-2) + n +\delta$, where $\delta = 1$ if $\varepsilon =+1$ and $\delta= 0$ otherwise. This can be seen via local {\bf period coordinates}, constructed as follows.
Every $q$ has a a holonomy double cover $\widehat q \to q$, branched over the zeros of odd order, so that $\widehat q$ is a (possibly disconnected) abelian differential with a holomorphic covering involution.
Let $H^1(\widehat S, \widehat Z; \CC)^-$ denote the $(-1)$-subspace for this involution, where $(\widehat S,\widehat Z)\cong (\widehat q, Z(\widehat q))$.
Take a triangulation of $q$ by {\bf saddle connections}, singularity-free geodesics joining the cone points of $q$; this can be lifted to a triangulation $\widehat {\mathsf T}$ of $\widehat q$ by saddle connections. 
Integrating $\widehat q$ over the edges of $\widehat {\mathsf T}$ defines a cohomology class in $H^1(\widehat S, \widehat Z; \CC)^-$, and any small deformation of all of the triangles subject to the constraints imposed by $H^1(\widehat S, \widehat Z; \CC)^-$ defines a new quadratic differential in the same stratum.
The period vector mapping into $H^1(\widehat S,\widehat Z;\CC)^-$ near $q$ is called a \textbf{period coordinate chart}.
Two overlapping period coordinate charts differ by an integer linear transformation. We refer the reader to \cite{Zsurvey}, \cite{Wright_Mirz}, and \cite{AMbook} for a more thorough discussion of period coordinates and background on flat surfaces.

In particular, for each $q \in \QT_g(\sing; \varepsilon)$, the tangent space may be locally modeled by
\[T_q\QT_g(\sing;\varepsilon) \cong H^1(\widehat S,\widehat Z;\CC)^-.\]
The same is true for $\QM_g(\sing; \varepsilon)$ modulo orbifold issues (which can be resolved by passing to a finite cover).

There is an action of $\SL_2\RR$ on $\QM_g$  preserving each stratum component $\cQ$ as well as its unit-area locus $\cQ^1$.
For $M \in \SL_2\RR$ and $q\in \cQ$, $M \cdot q\in \cQ$ is defined by postcomposing charts for the singular flat metric obtained by integration 
with the $\RR$-linear map $M: \CC \to \CC$.
Since these charts are only defined up to $\pm 1$, this $\SL_2\RR$ action factors through $\PSL_2 \RR$.
In a period coordinate chart $H^1(\widehat {S}, \widehat {Z};\CC)^- \cong \CC \otimes H^1(\widehat {S}, \widehat {Z};\RR)^-$, 
the action is given by applying $M$ to $\CC$.
The one-parameter subgroups 
\[A = \left\{g_t = \begin{pmatrix}
    e^{t} & \\
     & e^{-t}
\end{pmatrix} \mid t\in \RR\right\}
\hspace{1em} \text{ and } \hspace{1em} U =  
\left\{u_s = \begin{pmatrix}
    1 & s\\
     &1
\end{pmatrix} \mid t\in \RR\right\}\]
induce the \textbf{Teichm\"uller geodesic flow}
and \textbf{unstable Teichm\"uller horocycle flow}, respectively 
We denote by $P\le \PSL_2\RR$ the group of upper-triangular matrices generated by geodesic and horocycle flows.

Each component $\cQ$ of each stratum of quadratic differentials has a natural affine measure coming from period coordinate charts.
This induces a $\PSL_2\RR$-invariant ergodic {\bf Masur--Smillie--Veech probability measure} $\nu_\cQ^1$ on the unit area locus $\cQ^1$
by assigning to a set $E \subset \cQ^1$ the affine measure of the cone $\{tE \mid t \in (0,1]\}$, divided by the total mass of $\cQ^1$.
A characterization of the $\PSL_2\RR$-invariant ergodic probability measures on $\QoM_g$ was given by Eskin--Mirzakhani \cite{EM}.  See \S\ref{sec: AISs} and Theorem \ref{thm:EMM} below.

\subsubsection*{Differentials and foliations}
Every $q\in \QM_g$ is equipped with an \textbf{imaginary foliation} $\Im(q)$ by horizontal lines (which supports a  transverse measure recording the total variation of the imaginary part of $q$) as well as a \textbf{real foliation} $\Re(q)$ by vertical lines supporting a transverse measure locally of the form $|dx|$.
Let $\MF_g$ denote Thurston's space of equivalence classes of measured foliations (where two foliations are equivalent if they differ by Whitehead moves and isotopy). 
An important result of Gardiner--Masur building on work of Hubbard--Masur shows that the map recording the equivalence classes of $\Im(q)$ and $\Re(q)$
\begin{equation}\label{eqn:GM}
    \QT_g \to \MF_g \times \MF_g \setminus \Delta
\end{equation}
is a homeomorphism \cite{GM,HubMas}, where
\[\Delta := \{(\eta, \lambda) \mid \exists \gamma \in \MF_g \text{ such that } i(\gamma, \eta) + i(\gamma, \lambda) = 0\}.\]

Following \cite{Veechgeo} and \cite{Fornidev} (see also \cite[Theorem 3.15]{ABEM}), we define a pair of transverse foliations of $\QT_g$ and $\QoT_g$.
Given $q \in \QM_g$, define 
\[\mathcal W^{uu}(\lambda) := \{q \in \QT_g: \text{ the imaginary foliation } \Im(q) \text{ is equivalent to } \lambda\}.\]
The intersection of $\mathcal W^{uu}(\lambda)$ with the unit area locus is often referred to as the {\bf strong unstable} foliation for the Teichm\"uller geodesic flow. The foliation $\mathcal W^{ss}(\lambda)$ is defined similarly with the imaginary part replaced with the real, and its intersection with $\QoT_g$ is often referred to as the {\bf strong stable} foliation.
Each leaf $\mathcal W^{uu}(\lambda)$ has a stratified real-analytic structure coming from restricting the linear structure of strata of $\QT_g$ to intersections with $\mathcal W^{uu}(\lambda)$.

For every $\lambda$, define $\MF(\lambda):= \{ \eta \in \MF_g \mid (\eta, \lambda) \notin \Delta\}.$ 
Using \eqref{eqn:GM}, we have
\[\MF(\lambda) \cong \mathcal W^{uu}(\lambda) \subset \QT_g,\]
which we can use to equip $\MF(\lambda)$ with a stratified real-analytic structure.

\subsection{The earthquake flow}
By the uniformization theorem, $\T_g$ can also be identified with the space of marked hyperbolic structures on $S$.
This viewpoint yields global analytic \textbf{Fenchel--Nielsen} coordinates on $\T_g$ adapted to any pair of pants decomposition $\gamma$, comprised of $3g-3$ coordinate length functions $\ell_i$ and $3g-3$ real valued twist functions $\tau_i$.
A choice of Fenchel--Nielsen coordinates requires a choice of what ``zero twist'' should mean, however, the differentials $d\tau_i$ are defined. In other words, the twisting data form a principal $\RR^{3g-3}$-bundle over the length data.

Wolpert proved that the symplectic $2$-form 
\[\omega_{\WP} = \sum_{i =1}^{3g-3} d\ell_i \wedge d\tau_i\]
coincides with the imaginary part of the Weil--Petersson K\"ahler form, hence is invariant by $\Mod_g$ \cite{WolWP}.
The top wedge power of $\omega_{\WP}$ defines a volume form and corresponding {\bf Weil--Petersson measure} $\vol_{\WP}$ on $\M_g$. This measure is in the class of Lebesgue and has finite total mass.

For any $X \in \T_g$, Thurston introduced the space $\ML(X)$ of \textbf{measured geodesic laminations} as a natural completion of the set of simple closed geodesics on $X$ \cite{Thurston:bulletin,CB,PennerHarer}.
A \textbf{geodesic lamination} $\lambda \subset X$ is a closed set that is foliated by disjoint, simple, complete geodesics.
The metric completion $X\setminus \lambda$ of its complement is a finite area hyperbolic surface with (possibly non-compact) totally geodesic boundary.
A \textbf{transverse measure} on $\lambda$ is 
the assignment of a measure $\mu_k$ to each $C^1$ arc $k$ transverse to $\lambda$, such that the support of $\mu_k$ is $k\cap \lambda$, and such that this assignment is natural under inclusion and invariant under transverse isotopy.
Throughout, we will write $\lambda$ to refer both to a transverse measure and its supporting geodesic lamination.

Neither the set of geodesic laminations nor the space $\ML(X)$ depends on the metric $X$; as such, we use the notation $\ML_g$ to refer to the space of measured laminations.
The theory of train tracks gives a family of coordinate charts specifying polyhedral cones in $\ML_g$; in each chart, $\mathbb N$-weighted multicurves specify an integer lattice. Transition maps between these charts are in $\SL_n$, and hence this lattice can be used to define a $\Mod_g$-invariant Lebesgue-class measure on $\ML_g$ called the \textbf{Thurston measure} $\mu_{\Th}$.
The generic $\lambda$ with respect to $\mu_{\Th}$ is {\bf maximal}, in that $X \setminus \lambda$ is a union of ideal triangles.

Given $X \in \T_g$, the function that sends a multicurve $\gamma$ to its geodesic length on $X$ extends homogeneously and continuously to the space of measured laminations.
Denote by $\ML^1(X)$ the sphere of unit length measured laminations on $X$;
then $\mu_{\Th}$ defines a finite measure $\hat{\mu}_{\Th}^X$ on $\ML^1(X)$ by
\[\hat{\mu}_{\Th}^X(E) := 
\mu_{\Th}\{ c \lambda \mid \lambda \in E, c \in [0,1]\}.\]
Let $B(X)$ denote the mass of $\hat{\mu}_{\Th}^X$, equivalently, the $\mu_{\Th}$ measure of the ball of length $\le 1$ laminations on $X$.

There is a flat bundle $\PT_g$ over $\T_g$ with fiber $\ML(X)$ over $X$; the quotient by $\Mod_g$ is denoted by $\PM_g$.
Similarly to quadratic differentials, we will decorate with a superscript $1$ when we restrict to the locus of unit-length measured laminations.
By \cite[Theorem 5.1]{Th_stretch}, these bundles can also be identified with the (unit) cotangent bundles of $\T_g$ and $\M_g$, respectively.
Integrating the fiberwise measures $\hat{\mu}_{\Th}^X$ against the Weil--Petersson volume yields the $\Mod_g$-invariant, Lebesgue-class \textbf{Mirzakhani measure} on $\PoT_g$.
Its local pushforward to $\PoM_g$ is a finite measure $\mu_{\Mirz}$ that we also call by the same name. The total total mass of $\mu_{\Mirz}$ is
\[b_g := \int_{\M_g} B(X) \, d\vol_{\WP}(X),\]
and by construction, its pushforward to $\M_g$ under the forgetful map is $B(X) d \vol_{\WP}$(X).

The Mirzakhani measure is invariant and ergodic for the (right) {\bf earthquake flow}, defined as follows.
Given a $(X,c\gamma) \in \PoM_g$, where $c\gamma$ is a weighted simple closed curve, $\Eq_s(X,c\gamma) = (\Eq_s(X,c\gamma), c\gamma)$, where the first coordinate is the hyperbolic metric obtained by cutting $X$ open along $\gamma$ and re-gluing with a right twist of length $cs$.
Equivalently, this flow translates by $cs$ in Fenchel--Nielsen coordinates adapted to a pants decomposition containing $\gamma$.
For $\lambda \in \ML(S)$ approximated by weighted simple closed curves $c_n \gamma_n$, the homeomorphisms 
$X\mapsto \Eq_s(X,c_n\gamma_n)$ converge to a map called the right earthquake in $\lambda$ (see \cite{Kerckhoff_NR} or \cite{Th:EQ}).
This map also admits an independent description by cutting and re-gluing all of the complementary components of $\lambda$ on $X$ with a rightwards shear, moving all components relative to one another by amounts dictated by the transverse measure to $\lambda$.

\subsection{The conjugacy}\label{subsec:conj}
There is an important connection between the horocycle and earthquake flows introduced above: they are Borel isomorphic (see Theorem \ref{thm:conjugacy} below).
This relationship was first observed by Mirzakhani on the full $\mu_{\Mirz}$-measure subset of pairs $(X,\lambda) \in \PoM_g$ where $\lambda$ is a maximal geodesic lamination \cite{MirzEQ} and was extended to the entirety of $\PoM_g$ by the authors \cite{shshI}.

This isomorphism is constructed by assigning a ``dual'' foliation to every lamination $\lambda$ on a hyperbolic surface $X$.
Consider a component $Y$ of $X \setminus \lambda$.
Most points of $Y$ have a unique closest point on $\partial Y$; those that have multiple closest points on $\partial Y$ form a piecewise geodesic $1$-complex $\Sp(Y)$ called the \textbf{spine} of $Y$ (see also \S\ref{subsec: arcs and rgs} below).
The fibers of the closest point--projection map $Y\setminus \Sp(Y) \to \partial Y$ form a foliation of $Y\setminus \Sp(Y)$. Extending continuously across $\Sp(Y)$ yields a piecewise geodesic singular foliation $\cO_{\partial Y}(Y)$ called the \textbf{orthogeodesic foliation}.
This foliation has $k$-pronged singularities at points that are equidistant from $k$ different points of $\partial Y$, and every endpoint of every leaf of $\cO_{\partial Y}(Y)$ meets $\partial Y$ orthogonally.

Because their leaves are always orthogonal to $\lambda$ and the line field tangent to $\lambda$ is Lipschitz continuous, the orthogeodesic foliations on the components of $X\setminus \lambda$ can be extended continuously across $\lambda$, yielding a singular foliation $\cO_{\lambda}(X)$ on $S$.
This foliation also carries a natural transverse measure, defined by assigning to any geodesic arc in $\lambda$ its Lebesgue measure and then extending uniquely to all arcs via transverse isotopy.
See \cite[Section 5]{shshI} for more details.

This defines a map $\cO_\lambda: \T_g \to \MF_g$ recording the equivalence class of $\cO_\lambda(X)$; the following theorem records the global properties of $\Ol$.
When $\lambda$ is maximal, it is due to Thurston \cite{Th_stretch}; see also \cite{Bon_SPB} for a treatment in terms of transverse H\"older distributions.
For arbitrary $\lambda$, the following is a direct consequence of Theorems D and E of \cite{shshI}.

\begin{theorem}\label{thm:ortho homeo}
For every $\lambda\in \ML_g$, the map $\Ol$ is a stratified real-analytic homeomorphism
\[\Ol:\T_g \to \MF(\lambda) \cong \mathcal W^{uu}(\lambda)\]
that is equivariant with respect to the stabilizer of $\lambda$ in $\Mod_g$ and satisfies $i(\cO_\lambda(X), \lambda) = \ell_X(\lambda)$.
\end{theorem}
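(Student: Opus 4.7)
The plan is to prove the four constituent claims of the theorem (target containment, intersection formula, bijectivity, stratified analyticity) in turn, leveraging Thurston's shear-coordinate approach in the maximal case and extending it using the ribbon-graph combinatorics of the spines of complementary pieces.

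First I would verify the target. The foliation $\cO_\lambda(X)$ has no closed leaves (leaves either limit onto $\lambda$ or terminate on spines), and every essential simple closed curve $c$ either crosses $\lambda$ or is homotopic into some component $Y$ of $X \setminus \lambda$; in the latter case $c$ must cross $\cO_{\partial Y}(Y)$ because the orthogeodesic foliation fills $Y$ rel $\partial Y$. Hence $i(\mu, \cO_\lambda(X)) + i(\mu,\lambda) > 0$ for every nonzero $\mu \in \ML_g$, so $\cO_\lambda(X) \in \MF(\lambda)$. The intersection formula $i(\cO_\lambda(X),\lambda) = \ell_X(\lambda)$ is built into the definition: the $\cO_\lambda$-transverse measure assigned to a sub-arc of $\lambda$ is its hyperbolic arc-length, and summing over a decomposition of $\lambda$ into short transversals gives $\ell_X(\lambda)$. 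The $\Stab(\lambda)$-equivariance of $\Ol$ is automatic since closest-point projection and the transverse-measure definition are natural under isometries preserving $\lambda$.

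The heart of the proof is bijectivity together with inversion. For injectivity, I would set up coordinates adapted to $\lambda$: each component $Y$ of $X \setminus \lambda$ is a finite-type hyperbolic surface with geodesic boundary whose metric is captured by the combinatorial type of its spine $\Sp(Y)$ plus edge-length parameters, and these parameters (together with the boundary lengths, obtained by integrating the transverse measure along each boundary component) are recorded by $\cO_{\partial Y}(Y)$-transverse measures of arcs from spine vertices to $\partial Y$. The gluings across $\lambda$ are recovered from how $\cO_\lambda(X)$ crosses $\lambda$, which encodes the shear cocycle needed to reassemble $X$. For surjectivity I would invert directly: given $\eta \in \MF(\lambda)$, the pair $(\eta,\lambda)$ jointly fills, and the restriction of $\eta$ to each complementary region plus the shear data along $\lambda$ can be realized by a unique hyperbolic structure $X$. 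Alternatively, once continuity and injectivity are established and dimensions match stratum-by-stratum, invariance of domain upgrades $\Ol$ to a homeomorphism.

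For the stratified real-analytic structure, I would fix the combinatorial spine type on each component of $X \setminus \lambda$ over an open subset of $\T_g$; this corresponds to fixing the zero orders of the flat surface $q \in \mathcal W^{uu}(\lambda) \cong \MF(\lambda)$, i.e., to fixing a stratum component of $\QT_g$. Within such a region, both the hyperbolic parameters (boundary lengths, internal spine-edge lengths, shears across $\lambda$) and the transverse-measure coordinates on $\MF(\lambda)$ are mutually real-analytic, and the strata fit together along spine collision/collapse moves mirroring the adjacencies of flat strata. The main obstacle is the inversion step for each complementary piece: recovering a finite-type hyperbolic surface with geodesic boundary from its orthogeodesic transverse measure is a genuine inverse problem, requiring either an explicit construction from the spine combinatorics or a compactness-plus-continuity argument, and executing this uniformly across all stratum types—including degenerate ones where the spine has high-valence vertices—is where the bulk of the technical work concentrates.
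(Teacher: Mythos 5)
This theorem is not proved in the present paper: the authors quote it from prior work (Thurston for maximal $\lambda$, and Theorem~E of \cite{shshI} for arbitrary $\lambda$), so there is no ``paper's own proof'' to compare against. That said, your sketch is a faithful outline of the strategy in the cited source. The proof there builds \emph{shear-shape coordinates} adapted to $\lambda$: each complementary piece $Y$ of $X\setminus\lambda$ is parametrized by its metric spine via Luo's homeomorphism $\T(Y)\cong\TRG(Y)$, the gluings across $\lambda$ are recorded by a transverse shear cocycle, and $\Ol$ is shown to intertwine these coordinates with train-track/edge-weight coordinates on $\MF(\lambda)$; the stratification by spine combinatorics matches the stratification of $\mathcal W^{uu}(\lambda)$ by singularity orders. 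You have also correctly located the technical bottleneck: inverting the spine parametrization on each complementary piece and proving that the resulting reconstruction map is proper and of degree one is where the bulk of the cited paper's effort lies.

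Two small points to tighten. In your first step you test the ``jointly fills'' condition only against simple closed curves; to conclude $(\cO_\lambda(X),\lambda)\notin\Delta$ in the sense of \eqref{eqn:GM} you must rule out $\mu\in\MF_g$ with $i(\mu,\cO_\lambda(X))+i(\mu,\lambda)=0$ for \emph{all} nonzero $\mu$, which requires an appeal to density and continuity (or a direct lamination argument). And the inversion on a complementary piece is not merely reading off edge lengths: the transverse-measure data must be checked to lie in the correct chamber of $\TRG(Y)$ and to be compatible across the gluings --- this is exactly the wall-and-chamber phenomenon the paper flags in \S3 (Figure~\ref{fig:emptyface}), and it is where the ``degenerate'' strata you worry about require genuine care. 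Neither is a fatal gap; your outline matches the intended proof.
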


\begin{remark}
    Theorem D of \cite{shshI} establishes that $\Ol$ is a homeomorphism. Theorem E therein asserts that $\Ol$ factors as $\Ol = I_\lambda\inverse\circ \sigl$, where $\sigl$ is stratified real-analytic and $I_\lambda\inverse$ is linear when restricted to each stratum. Thus $\Ol$ is also stratified real-analytic.
\end{remark}

It is well known that $\ML_g$ and $\MF_g$ are canonically homeomorphic \cite{Levitt}.
Combining this fact with Theorem \ref{thm:ortho homeo} and \eqref{eqn:GM}, 
we obtain a $\Mod_g$-equivariant bijection
\[\begin{array}{rcccl}
\cO: \PT_g = & \T_g \times \ML_g & \to & \MF_g \times \MF_g \setminus \Delta & \cong \QT_g. \\
    & (X, \lambda) & \mapsto & (\Ol(X), \lambda)
\end{array}\]
Moreover, for $(X,\lambda) \in \PT_g$, the real foliation of $\cO(X,\lambda)$ is actually {\em isotopic} to $\cO_\lambda(X)$ (see \cite[\S5.3]{shshI}).

The fact that the length of $\lambda$ on $X$ is equal to the area of $\cO(X, \lambda)$ combined with the $\Mod_g$-equivariance of $\cO$ imply that it restricts to a bijection 
\[\cO:\PoM_g \to \QoM_g\]
of the same name.
The following omnibus theorem combines the main theorems of \cite{MirzEQ} and \cite{shshI} together with a special case of the main result of \cite{shshII}. See the footnotes for specific citations.

\begin{theorem}\label{thm:conjugacy}
The map $\cO:\PoM_g \to \QoM_g$ is a bijective Borel isomorphism\footnote{Theorem C of \cite{shshI} and Theorem 15.4 of \cite{shshII}.} with the following properties:
\begin{enumerate}
    \item $\cO$ takes earthquake flow to horocycle flow in a time-preserving way: $\cO \circ \Eq_s = u_s \circ \cO$.\footnote{Theorem A of \cite{shshI}; see also Theorem 1.1 of \cite{MirzEQ}.}
    \item $\cO_* \mu_{\Mirz}/ b_g$ is the Masur--Smillie--Veech probability measure on the principal stratum of $\QoM_g$.\footnote{Section 1.2 of \cite{MirzEQ}.}
    \item Each $k$-pronged singularity of $\Ol(X)$ corresponds to an order $(k-2)$ zero of $\cO(X, \lambda)$ and vice versa.\footnote{Proposition 5.10 of \cite{shshI}.}
    \item If $\cQ^1$ is a component of a stratum of $\QoM_g$ and $q \in \cQ^1$ has no horizontal saddle connections, then $q$ is a point of continuity for $\cO\inverse|_{\cQ^1}$.\footnote{A stronger version of this is proved as Theorem 15.6 of \cite{shshII}.}
\end{enumerate}
\end{theorem}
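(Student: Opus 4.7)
The plan is to verify each of the four claims by leveraging the cited works: parts (1)--(3) combine the geometric constructions of \cite{MirzEQ, shshI} with the foliation-theoretic setup preceding Theorem \ref{thm:ortho homeo}, while part (4) reduces to a special case of the main structural result of \cite{shshII}.

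For Part (1), I would first verify the intertwining on the dense subset where $\lambda = c\gamma$ is a weighted simple closed curve. The earthquake $\Eq_s$ cuts $X$ along $\gamma$ and reglues with right twist $cs$, while the flat surface $\cO(X, c\gamma)$ contains a horizontal cylinder across the cut of circumference $\ell_X(\gamma)$ and height $c$; the horocycle $u_s$ shears this cylinder horizontally by exactly $cs$, matching the earthquake. To extend to arbitrary $\lambda$, I would approximate by weighted multicurves $c_n\gamma_n \to \lambda$ and invoke continuity along the unstable leaf $\mathcal W^{uu}(\lambda)$. Since the earthquake flow fixes $\lambda$ (the horizontal foliation on the flat side) and moves $\Ol(X)$, the entire trajectory lies in $\mathcal W^{uu}(\lambda)$, and the stratified real-analytic continuity supplied by Theorem \ref{thm:ortho homeo} is precisely the tool needed. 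Part (3) is then a local unpacking of the spine: near a $k$-valent vertex of $\Sp(Y)$ for a component $Y \subset X \setminus \lambda$ (a point equidistant from $k$ distinct points of $\partial Y$), exactly $k$ leaves of $\cO_\lambda(X)$ fan out. Since the real foliation of $\cO(X,\lambda)$ is isotopic to $\cO_\lambda(X)$, and a $k$-pronged singularity of a quadratic-differential foliation is precisely a zero of order $k-2$, the correspondence follows; the only bookkeeping check is at ideal-polygon components, where the spine reduces to a single centrally-located vertex of valence equal to the number of cusps.

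For Part (2), the cleanest approach is a direct local computation comparing measures in coordinates. On the earthquake side, $\mu_{\Mirz}$ disintegrates locally as the fiberwise Thurston measure $\hat\mu_{\Th}^X$ integrated against $d\vol_{\WP}(X)$; on the horocycle side, the Masur--Smillie--Veech measure is the affine measure in period coordinates on the principal stratum. Computing the Jacobian of $\cO$ between these two coordinate systems, as done by Mirzakhani on the maximal locus in \cite{MirzEQ}, shows the two measures match after dividing by the total mass $b_g$. The fact that $\cO$ sends the full $\mu_{\Mirz}$-measure subset of pairs with maximal $\lambda$ onto the principal stratum is a consequence of Part (3), since maximal laminations have only trivalent complementary regions, corresponding to simple zeros.

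For Part (4), I would appeal to the continuity results of \cite{shshII}. The geometric point is that when $q \in \cQ$ has no horizontal saddle connections, the horizontal foliation of $q$ (which recovers $\lambda$) has a stable combinatorial type under small perturbations in $\cQ$: the isotopy type of the partitioning of $S$ into components of $S \setminus \lambda$ persists, and so does the local spine structure on each piece. One can then reconstruct $\cO\inverse(q') = (X', \lambda')$ from $q'$ by a continuous local procedure using the inverse of Theorem \ref{thm:ortho homeo} on each unstable leaf. The hard part, and the content of \cite{shshII}, is controlling sequences $q_n \to q$ where the $q_n$ develop horizontal saddle connections that collapse in the limit, since this is precisely where the combinatorial type of the spine (equivalently, the stratum of the flat surface) can jump and $\cO\inverse$ can genuinely fail to be continuous without the saddle-connection hypothesis.
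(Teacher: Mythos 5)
The paper does not actually prove Theorem~\ref{thm:conjugacy}; it is stated as an omnibus result whose four parts are quoted from \cite{MirzEQ}, \cite{shshI}, and \cite{shshII}. Your sketch of Parts (2)--(4) is broadly faithful to how the cited papers establish them: Part (2) is indeed a Jacobian computation comparing the disintegration of $\mu_{\Mirz}$ against the affine measure in period coordinates (Mirzakhani's argument on the maximal locus), Part (3) follows from the local description of the spine and the fact that the real foliation of $\cO(X,\lambda)$ is isotopic to $\Ol(X)$, and Part (4) is exactly the content of the continuity theorem in \cite{shshII}, which controls sequences developing collapsing horizontal saddle connections.

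The serious gap is in your argument for Part (1). You propose to verify $\cO\circ\Eq_s = u_s\circ\cO$ on weighted simple closed curves $c\gamma$ and then pass to general $\lambda$ by taking $c_n\gamma_n\to\lambda$ and ``invoking continuity along the unstable leaf $\mathcal W^{uu}(\lambda)$.'' But the approximation is \emph{transverse} to the unstable foliation: the pairs $(X, c_n\gamma_n)$ live on the leaves $\mathcal W^{uu}(c_n\gamma_n)$, not on $\mathcal W^{uu}(\lambda)$, so leafwise continuity is the wrong tool. Worse, $\cO$ is genuinely \emph{discontinuous} along sequences of this kind --- precisely when the Hausdorff limit of $\supp(c_n\gamma_n)$ differs from $\supp(\lambda)$ (this is the content of the Remark following Theorem~\ref{thm:conjugacy} and the discontinuity examples in \cite[p.~33]{MirzEQ} and \cite{AHW:continuous}) --- and for a generic approximating sequence the Hausdorff limit jumps. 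So the limit $\cO(X, c_n\gamma_n)\to\cO(X,\lambda)$ need not hold, and the extension does not go through. The way \cite{MirzEQ} (maximal case) and \cite{shshI} (general case) actually handle Part (1) is to work within a fixed leaf: for each fixed $\lambda$, they build adapted coordinates on $\T_g$ (Thurston's shear coordinates, resp.\ the shear-shape coordinates of \cite{shshI}) and matching period-type coordinates on $\mathcal W^{uu}(\lambda)$, and then verify directly that earthquake in $\lambda$ and the Teichm\"uller horocycle flow are both linear translations in these coordinates and that $\Ol$ intertwines them. That per-leaf computation bypasses the transverse discontinuity entirely.
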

All $\PSL_2\RR$-invariant probability measures on strata give zero measure to the set of differentials with horizontal saddle connections (see \S\ref{sec: AISs} below).
Item (4) therefore allows us to pull back equidistribution to such measures \cite[Theorem 5.1]{B:measures}, and implies Theorem \ref{thm: pull back equidistribution} from the Introduction.

\begin{remark}
Mirzakhani pointed out that $\cO$ (or any extension of her original conjugacy) is discontinuous along sequences $(X_n,\lambda_n)$ where the measured and Hausdorff limits of maximal laminations $\lambda_n$ disagree \cite[p. 33]{MirzEQ}.
It is now known that there can be no continuous map $\PoM_g \to \QoM_g$ conjugating the earthquake and horocycle flows \cite{AHW:continuous}, so Theorem \ref{thm:conjugacy} (and the more general Theorem A of \cite{shshII}) are essentially optimal.
\end{remark}

The conjugacy $\cO$ allows us to pull back the Teichm{\"u}ller geodesic flow from $\QoM_g$ to define a measurable {\bf dilation flow} $(\Dil_t)$ on $\PoM_g$ that normalizes the earthquake flow (see \cite[\S 15.3]{shshI}):
\[\Dil_t \circ \Eq_s \circ \Dil_{-t} = \Eq_{e^{2t}s}.\]
This transformation can also be characterized by scaling the orthogeodesic foliation (and hence the spine of $X \setminus \lambda$).
Given any $X \in \T_g$ and $\lambda \in \ML_g$, the {\bf dilation ray $\{X_t\}$ directed by $\lambda$} is the projection of $\Dil_t(X, \lambda)$ to $\T_g$.
By Theorem \ref{thm:ortho homeo}, it is uniquely specified by the equation 
$\Ol (X_t) = e^t \Ol(X).$
 
\section{Hyperbolic twist tori}\label{sec: hyp twist tori}
We begin by establishing some notation for twist tori and associated measures on moduli spaces.
Throughout this section, we use Conjecture \ref{conj:twist tori} as a motivating example but phrase all of our results in generality.

\subsection{Twist tori}\label{subsec: hyp tt}
Let $\gamma = (\gamma_1, \ldots, \gamma_n)$ be a multicurve on $S$; throughout the paper we use $S \setminus \gamma$ to denote the (possibly disconnected) compact surface with boundary obtained by removing an open annulus about each $\gamma_i$.
Associated to $\gamma$ there is a {\bf cut-and-glue fibration} 
that expresses a hyperbolic structure on $S$ in terms of the induced hyperbolic structure on $S \setminus \gamma$ and how these structures are glued together.
Let $\T(S \setminus \gamma)$ denote the space of marked\footnote{Here, the marking does not record twisting about the boundary.}
hyperbolic structures with totally geodesic boundary on $S \setminus \gamma$ so that for each $i$, the two boundaries of $S \setminus \gamma$ that are glued along $\gamma_i$ have the same length.
Then we have the following principal $\RR^n$-bundle:
\[\begin{tikzcd}
\RR^n \arrow{r}
& \T_g \arrow{d}{\text{cut}_\gamma}\\
& \T(S\setminus \gamma)
\end{tikzcd}\]
where the $\RR^n$ fibers record the twisting about the curves of $\gamma$.
In the case that $\gamma$ is a pair of pants, these are just the usual Fenchel--Nielsen coordinates. For an arbitrary measured lamination $\lambda$, the authors developed {\em shear-shape coordinates} that parametrize $\T_g$ in a similar way \cite{shshI}.

Fix any $Y \in \T(S \setminus \gamma)$ and set $\bfl = \bfl(Y)$ to be the vector of boundary lengths of $Y$ (that is, the lengths of the curves of $\gamma$). The fiber cut$_\gamma^{-1}(Y)$ is an $\RR^n$-torsor, hence is equipped with $n$-dimensional Lebesgue measure.
The twist subgroup 
$\langle T_{\gamma_i}\rangle \cong \ZZ^n < \Mod(S)$
acts by translations on the fiber; quotienting yields an $n$-torus 
\[\text{cut}_{\gamma}^{-1}(Y) / \langle T_{\gamma_i}\rangle
\cong (\RR / \ell_1 \ZZ) \times \ldots \times (\RR / \ell_n \ZZ)
\subset \T_g / \langle T_{\gamma_i}\rangle\]
equipped with a measure $\widetilde{\tau}_\gamma(Y)$ of mass $\prod \ell_i$(which is the local pushforward of Lebesgue).

Let $\pi: \T_g \to \M_g$ and $\tilde{\pi}: \T_g / \langle T_{\gamma_i}\rangle \to \M_g$ denote the natural (orbifold) covering maps.

\begin{definition}\label{def:twisttori}
The {\bf twist torus} associated to $Y \in \T(S \setminus \gamma)$ is 
\[\TT_{\gamma}(Y):= \pi\left(\text{cut}_{\gamma}^{-1}(Y) \right)
=\tilde{\pi} \left( \text{cut}_{\gamma}^{-1}(Y) / \langle T_{\gamma_i}\rangle \right) 
\subset \M_g.\]
The associated {\bf twist torus measure} on $\M_g$ is
$\tau_\gamma(Y):= \tilde{\pi}_* \left( \widetilde{\tau}_\gamma(Y) \right).$
\end{definition}

%We remark that $\TT_{\gamma}(Y)$ may not itself be an (immersed) torus, but only a finite quotient thereof, in the case that $\gamma$ and $Y$ have has nontrivial symmetries in the mapping class group.

\subsubsection*{Lifting to the cotangent bundle}
In order to apply the results of \cite{shshII} and connect hyperbolic twist tori to their flat counterparts, we will need to first lift twist tori to $\PoM_g$.
To do so, choose any weight vector $\bfh = (h_1, \ldots, h_n) \in \RR_{>0}^n$ satisfying 
$\bfl \cdot \bfh = 1$.
We can then lift the fiber $\text{cut}_{\gamma}^{-1}(Y)$ into $\PoT_g$ by setting the second coordinate equal to ${\bfh}\gamma$, the weighted multicurve ${h_1} \gamma_1 \cup \ldots \cup {h_n} \gamma_n$.
Quotienting out by the twist subgroup yields an embedded torus in $\PoT_g /  \langle T_{\gamma_i}\rangle$ with its Lebesgue measure, and pushing down to moduli space we get a torus
\[\cP\TT_{\gamma}(Y, \bfh) := 
\pi \left(
\text{cut}_{\gamma}^{-1}(Y) \times \{ \bfh \gamma\}
\right) 
\subset \PoM_g\]
where $\pi$ is again the quotient by the action of the mapping class group.

We can now define a measure supported on $\cP \TT_{\gamma}(Y, \bfh)$ similarly to how we obtained $\tau_\gamma(Y)$:
there is a natural Lebesgue measure on $\text{cut}_\gamma^{-1}(Y) \times \{\bfh \gamma\}$, whose local pushforward yields a finite measure after quotienting by the twist subgroup, and we define $\mu_\gamma (Y, \bfh)$ to be the further pushforward of this measure to $\PoM_g$, {\em normalized to have unit mass}.
Unpacking definitions, we see that
\begin{equation}\label{eqn:pushtoridown}
p_* \left(\mu_\gamma (Y, \bfh) \right) = \frac{\tau_\gamma(Y)}{\ell_1 \cdots \ell_n}
\end{equation}
as measures on $\M_g$, where $p:\PoM_g \to \M_g$ is the natural projection map.

\begin{remark}
The map $p$ does not necessarily restrict to a homeomorphism between $\TT_\gamma(Y)$ and $\cP\TT_\gamma(Y, \bfh)$; it is generally just an (orbifold) covering.
For example, if $\gamma = (\gamma_1, \gamma_2)$ where each $\gamma_i$ is nonseparating but $h_1 \neq h_2$, then the stabilizer of $\gamma$ is larger than the stabilizer of $\bfh \gamma$.
\end{remark}

\subsection{Arc systems and ribbon graphs}\label{subsec: arcs and rgs}
In order to discuss families of expanding twist tori, we will need to vary the hyperbolic structure $Y$ on the complement of $\gamma$.
This is easy when $\gamma$ is a pants decomposition, because one can just scale up the lengths of the pants curves.
In general, we will consider the pushforwards of $\cP\TT_\gamma(Y, \bfh)$ by the dilation flow (defined in \S\ref{subsec:conj} above).

To describe these tori more concretely, we recall from Section \ref{subsec:conj} that the spine $\Sp(Y)$ of a hyperbolic surface with boundary $Y$ is the set of points that have multiple closest points on $\partial Y$.
The spine is naturally a ribbon graph (i.e., a graph with cyclic orderings at its vertices) whose vertices of valence $k$ correspond to $k$-pronged singularities of $\cO_{\partial Y}(Y)$.
It is also equipped with a metric given by assigning to each edge of $\Sp(Y)$ the length of either of its closest point--projections to $\partial Y$; equivalently, this is the mass of the transverse measure deposited by the orthogeodesic foliation on the geodesic connecting the two vertices.

Dually, the data of $\Sp(Y)$ can also be recorded as a weighted, filling arc system on $Y$.
Leaves of the orthogeodesic foliation $\cO_{\partial Y}(Y)$ break up into finitely many isotopy classes of arcs rel $\partial Y$ and we use $\arc$ to denote the set of these isotopy classes.
Assigning to each arc the length of its dual edge of $\Sp(Y)$ then defines a positive weight system on $\arc$.
See \cite[\S6]{shshI} for a more thorough discussion of this duality.

The map taking a taking a hyperbolic surface with boundary to its spine (or dual arc system) in fact defines a stratified piecewise real-analytic $\Mod_{g,b}$--equivariant homeomorphism \cite{Luo}
\[\Sp: \T_{g,b} \xrightarrow{\sim} \TRG_{g,b},\]
where $\T_{g,b}$ is the space of marked hyperbolic surfaces with genus $g$ and $b$ boundaries and $\TRG_{g,b}$ is the space of marked metric ribbon graphs (up to homotopy) with the same genus and number of boundaries.\footnote{A {\bf marking} of a ribbon graph $\Gamma$ is a choice of deformation retract $S_{g,b} \to \Gamma$.}
See also \cite{Ushijima, Do, Mond_handbook}.

The spine map takes surfaces with boundary lengths $\bfl$ to ribbon graphs whose boundary lengths are also $\bfl$.
Given a multicurve $\gamma$ on a closed surface $S$, we can therefore piece together the spine maps on each component of $S \setminus \gamma$ to get a homeomorphism
\begin{equation}\label{eqn:spine T(S minus gamma)}
\Sp:\T(S \setminus \gamma) \xrightarrow{\sim} \TRG(S \setminus \gamma)
\end{equation}
where $\TRG(S \setminus \gamma)$ is defined analogously to $\T(S \setminus \gamma)$.
More specifically, each curve of $\gamma$ corresponds to two different cycles in the ribbon graph spines; the lengths of these cycles must match. See \cite[pg. 39]{spine}.

We observe that when $\gamma$ is a pants decomposition, $\Sp(Y)$ is completely determined by the boundary lengths $\bfl$.
In the general case, the lengths of the $\gamma_i$ are explicitly obtainable from the metric on $\Sp(Y)$, and we will continue to use
$\bfl \in \RR^n$ to denote the vector of lengths of the curves of $\gamma$.

\begin{notation}In light of \eqref{eqn:spine T(S minus gamma)}, throughout the rest of the paper we will refer to twist tori and their measures in terms of the complementary metric ribbon graph $\rg = \Sp(Y) \in \TRG(S \setminus \gamma)$. For example,
\[\mu_\gamma(\rg):=\mu_\gamma(Y)
\text{ and }
\cP \mathbb T_\gamma(\rg, \bfh) := \cP \mathbb T_\gamma(Y, \bfh).\]
\end{notation}

The discussion in this section should make it clear how a twist torus changes when it is pushed forward by the dilation flow: the spine of the complementary subsurface is rescaled. That is,
\begin{equation}\label{eqn:dilationtori}
\Dil_t \left( \cP\TT_\gamma(\mathsf Y, \bfh) \right)
= \cP\TT_\gamma(e^t \mathsf{Y}, e^{-t}\bfh).
\end{equation}

\subsubsection*{Walls and chambers}
The space $\TRG(S \setminus \gamma)$ has a natural piecewise-linear structure and is stratified by the combinatorial type of the ribbon graphs. To avoid confusing these with strata of differentials, we will call such strata {\bf facets}.\footnote{The space of all ribbon graphs on $S \setminus \gamma$ is naturally a union of open orthants, and the gluing conditions cut out linear subspaces of each orthant.}
Top-dimensional facets correspond to trivalent metric ribbon graphs and lower-dimensional ones correspond to ribbon graphs with higher valence vertices --- this combinatorial structure is also readily apparent in the dual picture in terms of weighted filling arc systems.
For a given ribbon graph $\Gamma$ of the same topological type as $S \setminus \gamma$, we set $F(\Gamma)$ to denote the facet of $\TRG(S \setminus \gamma)$ consisting of metrics on $\Gamma$ satisfying the gluing conditions. 
Luo's work \cite{Luo} implies that $\Sp$ restricts to a real-analytic homeomorphism between each facet and a corresponding subvariety of $\T(S \setminus \gamma)$.

It is important to note that that the dimension of $F(\Gamma)$ is not necessarily the number of edges of $\Gamma$.
For the same reason, some choices of $\Gamma$ do not correspond to (nonempty) facets: this is because combinatorial conditions on the spine can enforce constraints on the lengths of the boundaries, which might not be compatible with the gluing conditions over the curves of $\gamma$.
See Figure \ref{fig:emptyface} for an example of this phenomenon.

\begin{figure}
    \centering
    \includegraphics[width=0.6\linewidth]{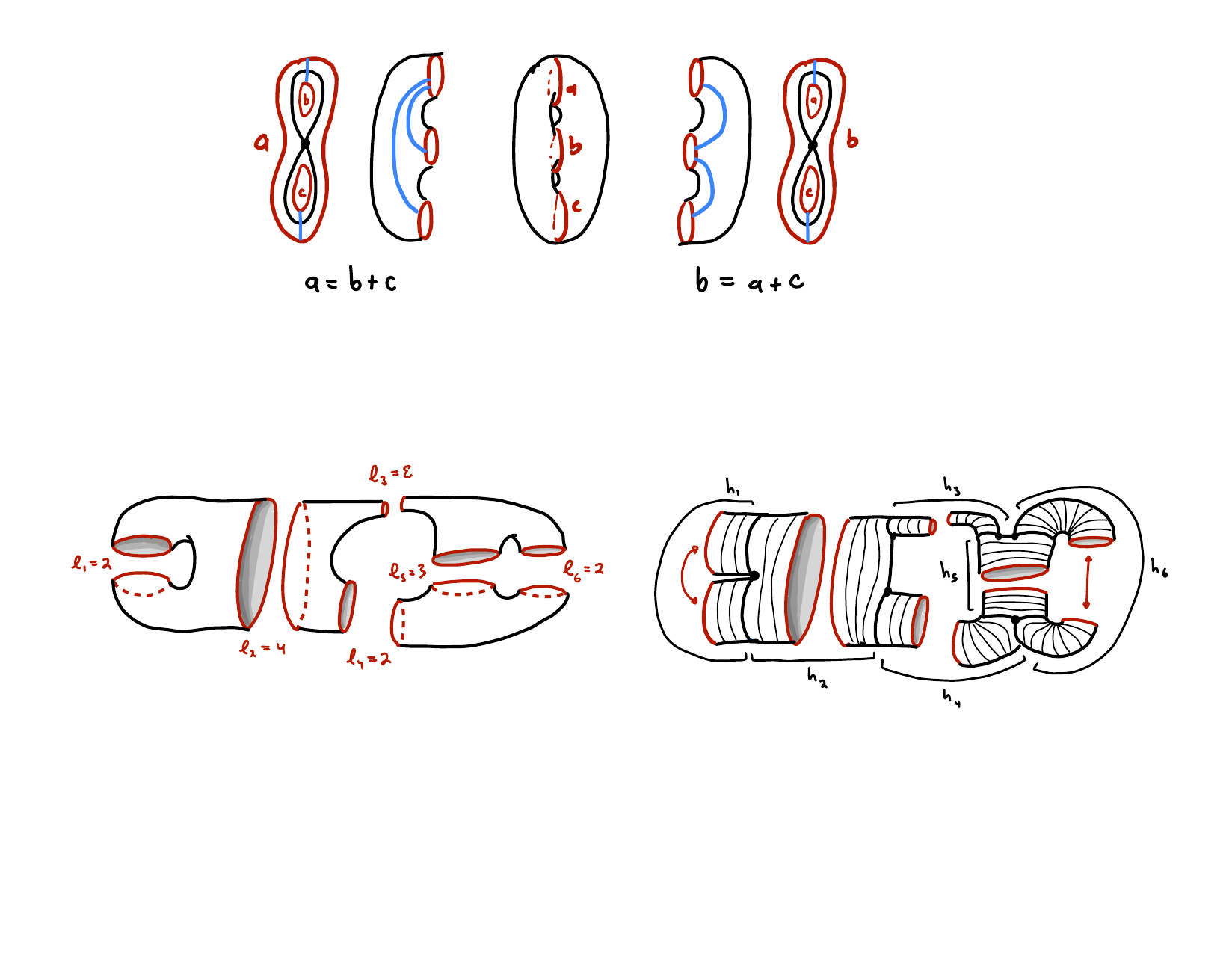}
    \caption{A pair of spines that does not appear in $\TRG(S \setminus \gamma)$ and their dual arc systems.
    Together, the combinatorial conditions imply that the length of $c$ must be 0, which cannot happen inside $\T(S \setminus \gamma)$.}
    \label{fig:emptyface}
\end{figure}

\subsection{Horospheres and slices}\label{sec:horospheres}

Given any $\bfh = (h_1, \ldots, h_n) \in \RR_{>0}^n$, define the length $1$ {\bf horosphere} in $\T_g$ based at the weighted multicurve ${\bfh}\gamma ={h_1} \gamma_1 \cup \ldots \cup {h_n} \gamma_n$ to be the $1$-level set of the (total) length function.\footnote{This discussion still holds for the level sets of other functions of the length of the components of $\gamma$; compare \cite{AH_horo,Liu_horo}.}
That is, 
\[H_{\gamma}(\bfh) := \left\{ X \in \T_g \mid \sum_{i=1}^n h_i \ell_X(\gamma_i) = 1 \right\}.\]
Each horosphere has a natural lift $\cP H_{\gamma}(\bfh)$ to $\PoT_g$ with $\bfh \gamma$ as the second coordinate.
Both the horosphere and its lift are homeomorphic to $\RR^{6g-7}$ and are foliated by (lifts of) twist tori.

Following Forni \cite{Forni}, we define a {\bf horospherical measure} to be any probability measure supported on $\cP H_{\gamma}(\bfh)$ that is absolutely continuous with respect to the Lebesgue class with continuous density and such that its restriction to each earthquake flow line is the restriction of Lebesgue to an interval.
We also say that the pushforward of such a measure to $\PoM_g$ is horospherical.\footnote{
For example, in \cite{AH_horo, Liu_horo, spine}, such measures are obtained by taking certain $\Stab(\bfh \gamma)$-invariant measures on $\cP H_{\gamma}(\bfh)$, taking a  local pushforward to the intermediate quotient $\PoT_g /\Stab(\bfh \gamma)$, then pushing down to $\PoM_g$.}

Following our discussion of the wall-and-chamber structure of $\TRG(S \setminus \gamma)$, given any facet $F(\Gamma)$, we also define the corresponding length $1$ {\bf horosphere slice} as follows:
\[H_{\gamma}(\bfh,\Gamma) := 
\left\{ X \in \T_g \mid \sum_{i=1}^n h_i \ell_X(\gamma_i) = 1
\text{ and }
\Sp(X \setminus \gamma) \in F(\Gamma)
\right\}.\]
As before, there is a natural lift to $\PoT_g$, both the horosphere slice and its lift are homeomorphic to a Euclidean space, and both are foliated by twist tori.
We say that a probability measure supported on $\cP H_{\gamma}(\bfh,\Gamma)$ is a {\bf horosphere slice measure} if it is in the class of Lebesgue with continuous density (now on the slice) and the conditional measures on earthquake flow lines are Lebesgue on intervals.
Just as above, we also use this term to denote pushforward of such a measure to $\PoM_g$.

\begin{remark}
There are natural examples of horospherical and horosphere slice measures coming from restricting the Weil-Petersson form or by integrating the natural affine measure on facets of $\TRG(S \setminus \gamma)$ against twist parameters.
For applications, one usually wishes to fix such a choice and compute its mass (pre-normalization).
For this reason, the definition of horospherical measures is usually a little more involved in order to record factors coming from orbifold issues.
We refer the interested reader to \cite[\S7]{spine} or \cite{AH_compcount}.
\end{remark}

\section{Flat twist tori}\label{sec: flat twist tori}
In this section, we discuss the flat counterparts of twist tori, horospheres, slices, and the relevant measures. 
We prove using the strong rigidity phenomena of \cite{EM,EMM} for the $\PSL_2\RR$-action on strata of quadratic differentials that (the union of) such expanding families equidistribute to the affine measure on an affine invariant subvariety.

\subsection{Twist tori and horospheres}\label{sec: flat tt and horo}
Corresponding to $\cP\TT_{\gamma}(\rg, \bfh)$ is a \textbf{flat twist torus} parameterizing quadratic differentials built from horizontal cylinders with core curves $\gamma$, heights $\mathbf h$, and lengths (circumferences) $\bfl$; these cylinders are glued with arbitrary twists along the metric ribbon graph $\rg$.
Indeed, define 
\[\cQ\mathbb T_\gamma(\rg,\bfh) = \cO(\cP\mathbb T_\gamma(\rg,\bfh)) \subset \QoM_g\]
and let
\[\nu_\gamma(\rg,\bfh) = \cO_*\mu_\gamma(\rg,\bfh)\]
be the associated probability measure.
By Theorem \ref{thm:ortho homeo}, $\cO$ restricts to a real analytic homeomorphism between  $\cP\mathbb T_\gamma(\rg, \bfh)$ and  $\cQ\mathbb T_\gamma(\rg, \bfh)$.

The differentials $q \in \cQ\mathbb T_\gamma(\rg,\bfh)$ are completely horizontally periodic, because their horizontal foliations are measure equivalent to $\bfh \gamma$, i.e., correspond to one another under the natural identification between $\MF_g$ and $\ML_g$ \cite{Levitt}.
With definitions as in \cite{Wright:cylinder} or \S\ref{subsec:cylrank} below, the flat twist torus
$\cQ\mathbb T_\gamma(\rg,\bfh)$ is just the orbit of any such by the abelian group $\RR^\gamma$ of twists in the horizontal cylinders, and $\nu_\gamma(\rg,\bfh)$ is Lebesgue on $\cQ\mathbb T_\gamma(\rg,\bfh)$.
We observe that $\cQ\mathbb T_\gamma(\rg,\bfh)$ is locally an affine submanifold in period coordinates, cut out by equations of the form $\Im(z) = h_i$ and $z=\ell(e)$, where $e$ is an edge of $\rg$ with length $\ell(e)$.
By our convention that $\bfl \cdot \bfh =1$, the intersection of these affine subspaces lies in the unit area locus.

The union of the horizontal saddle connections of any $q \in \cQ\mathbb T_\gamma(\rg,\bfh)$ is exactly $\rg$.
Thus, the angle around each singularity of the flat metric is $\pi$ times the valence of the corresponding vertex of $\rg$; see also \S2 of \cite{shshI}.
However, in order to identify the ambient stratum containing $\cQ\mathbb T_\gamma(\rg,\bfh)$ we also need to understand if $q$ is globally the square of an abelian differential. This can be checked combinatorially, as follows.

\begin{definition}\label{def: orientable}
    Let $\Sigma$ be a compact surface with non-empty boundary and let $\Gamma$ be a ribbon graph with topological type $\Sigma$. We say that $\Gamma$ is {\bf orientable} if its edges can be oriented so that each component of $\partial \Sigma$ inherits a consistent orientation. 
    Given a multicurve $\gamma \subset S$ and a ribbon graph $\Gamma$ with topological type $S \setminus \gamma$, the pair $(\gamma,\Gamma)$ is \textbf{jointly orientable} if each component of $\Gamma$ can be oriented in a way that induces a  consistent orientation on $\gamma$.
\end{definition}
Clearly, if $(\gamma, \Gamma)$ is jointly orientable, then each component of $\Gamma$ is orientable.
On the other hand, $(\gamma, \Gamma)$ need not be jointly orientable even if each component of $\Gamma$ can be oriented. See Figure \ref{fig: not_joint_orient}.

\begin{figure}
    \centering
    \includegraphics[width=0.75\linewidth]{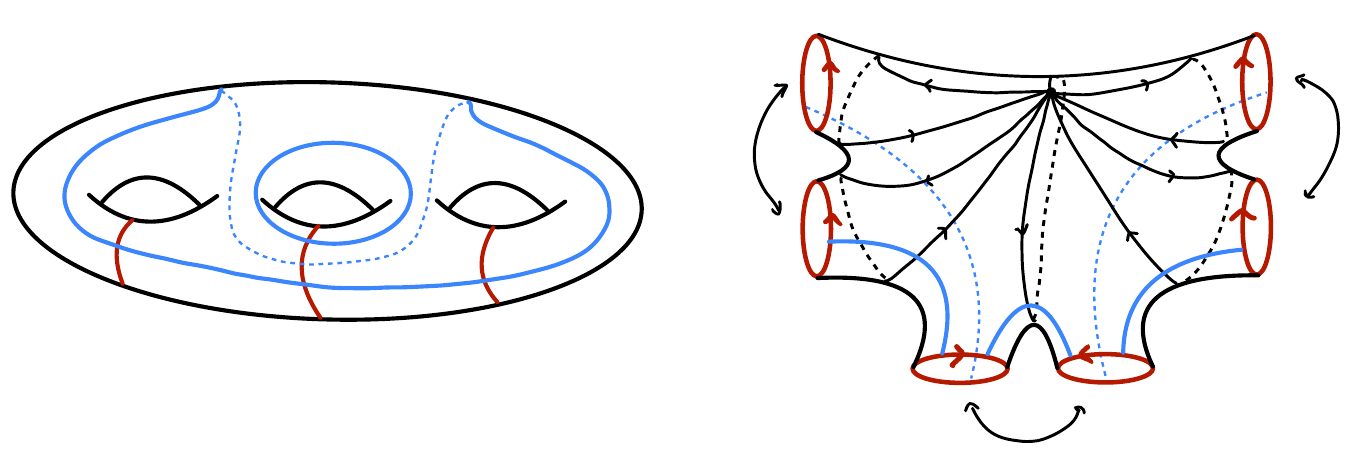}
    \caption{Left: a pair of multicurves on a surface.
    Right: An orientable ribbon graph $\Gamma$ with topological type $S\setminus \gamma$ where $(\gamma, \Gamma)$ is not jointly orientable.
    If one builds a square-tiled surface whose vertical and horizontal cylinders are the pair of multicurves on the left, then its graph of horizontal separatrices is the one shown on the right.}
    \label{fig: not_joint_orient}
\end{figure}

\begin{lemma}\label{lem: co-orient}
Let $q$ be a completely horizontally periodic quadratic differential with horizontal multicurve $\gamma$ and let $\Gamma$ denote the ribbon graph consisting of its horizontal separatrices (neither the height of the curves of $\gamma$ nor the length of the edges of $\Gamma$ are relevant). 
Then $q$ is globally the square of an abelian differential if and only if $(\gamma,\Gamma)$ is jointly orientable.
\end{lemma}
\begin{proof}
Consider the orientation double cover $\hat q \to q$.
We will show that, for any component $\Sigma \subset S\setminus \gamma$, the preimage of $\widehat\Sigma$ in $\hat q$ is disconnected if and only if the component of $\Gamma$ corresponding to $\Sigma$ is orientable, and $\widehat S$ is disconnected if and only if $(\gamma, \Gamma)$ is jointly orientable.

By construction, each curve of $\gamma$ can be realized as a core curve of a horizontal cylinder of $q$. Cutting along these core curves results in a flat cone surface with boundary homeomorphic to $S \setminus \gamma$.
This (possibly disconnected) surface now deformation retracts onto $\Gamma$ by collapsing the segments of its vertical foliation.

From this description, it follows that each component of $\Gamma$ is orientable if and only if the restriction of the horizontal foliation $\Im(q)$ to the corresponding component of $S \setminus \gamma$ is orientable (see also the discussion in Section 7.2 of \cite{shshI}), which happens if and only if the preimage $\widehat \Sigma$ is disconnected. Similarly, a joint orientation on $(\gamma, \Gamma)$ is equivalent to a global choice of orientation of $\Im(q)$ on $S$, which implies the holonomy double cover is disconnected.
\end{proof}

For later use, we record how twist tori and their associated measures interact with the $P$ action.

\begin{lemma}\label{lem:SL2Rtorimeasures}
For any $\gamma$, $\rg$, and $\bfh$, the twist torus measure $\nu_{\gamma}(\rg, \bfh)$ is $U$-invariant and satisfies
\begin{equation}\label{eqn:pushtorigeo}
(g_t)_* \nu_{\gamma}(\rg, \bfh) =
\nu_{\gamma}(e^t \rg, e^{-t} \bfh).
\end{equation}
\end{lemma}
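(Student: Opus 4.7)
The plan is to work directly from the explicit description of the flat twist torus given in \S\ref{sec: flat tt and horo}: a point of $\cQ\TT_\gamma(\rg,\bfh)$ is a quadratic differential that decomposes into horizontal cylinders with core curves $\gamma_i$, heights $h_i$, and circumferences $\ell_i$, glued along the horizontal metric ribbon graph $\rg$ via twist coordinates $(\tau_1,\dots,\tau_n)\in(\RR/\ell_1\ZZ)\times\cdots\times(\RR/\ell_n\ZZ)$; the measure $\nu_\gamma(\rg,\bfh)$ is by definition normalized Lebesgue in these coordinates. Both claims will follow by tracking how $u_s$ and $g_t$ act on these data in period coordinates.

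For $U$-invariance, I would note that $u_s$ acts on periods by $(x+iy)\mapsto(x+sy)+iy$, which fixes all imaginary parts. It therefore preserves the heights $h_i$ and the purely horizontal ribbon graph $\rg$, while adding $s h_i$ to the twist parameter of the cylinder with core $\gamma_i$. Hence $u_s$ acts as the affine translation $(\tau_1,\dots,\tau_n)\mapsto(\tau_1+sh_1,\dots,\tau_n+sh_n)$ on the flat twist torus, and normalized Lebesgue on a torus is translation invariant.

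For the geodesic flow claim, I would use that $g_t$ scales horizontal periods by $e^t$ and vertical periods by $e^{-t}$. Applied to any $q \in \cQ\TT_\gamma(\rg,\bfh)$, this rescales each $h_i$ to $e^{-t}h_i$, each $\ell_i$ to $e^t\ell_i$, and the horizontal metric ribbon graph $\rg$ to $e^t\rg$; the unit-area condition $(e^t\bfl)\cdot(e^{-t}\bfh)=1$ is automatically preserved. Thus $g_t$ defines a bijection $\cQ\TT_\gamma(\rg,\bfh)\to\cQ\TT_\gamma(e^t\rg,e^{-t}\bfh)$. Because $g_t$ is affine in period coordinates, it pushes the unnormalized Lebesgue measure on the source twist torus to a constant multiple of Lebesgue on the target torus; after normalizing each side to unit mass, one obtains $(g_t)_*\nu_\gamma(\rg,\bfh)=\nu_\gamma(e^t\rg,e^{-t}\bfh)$. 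No genuine obstacle appears here: the one point requiring a moment of care is that $\cQ\TT_\gamma(\rg,\bfh)$ is generally only immersed in $\QoM_g$, but since both sides of each equation are defined via the same local pushforward normalization, this covering-degree issue cancels.
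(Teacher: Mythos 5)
Your proof is correct and follows essentially the same approach as the paper's: identify $u_s$ as a torus translation by $(sh_1,\dots,sh_n)$ to get $U$-invariance, and observe that $g_t$ scales horizontal data so it carries the twist torus to $\cQ\TT_\gamma(e^t\rg,e^{-t}\bfh)$ and pushes Lebesgue forward to a scalar multiple of Lebesgue, hence preserves the normalized measure. The extra remark about the immersion and covering-degree cancellation is a reasonable detail the paper leaves implicit.
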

\begin{proof}
The action of the horocycle flow can be written as the sum of cylinder twists on $\gamma_i$ of total length $h_i$; since Lebesgue measure on a torus is translation invariant, we see that twist torus measures are $U$-invariant.

As the Teichm{\"u}ller geodesic flow stretches all horizontal data and shrinking all vertical data, it takes twist tori to twist tori. To see that the measures are taken to one another, we observe that the twist torus measure is defined via Lebesgue measure on the twists, which constitute horizontal data.
Therefore, the Teichm{\"u}ller geodesic flow uniformly expands the (non-normalized) Lebesgue measure on the twist torus, hence preserves the normalized Lebesgue measure.
\end{proof}

Similarly, for a given horosphere
$\cP H_{\gamma} (\bfh)$, 
we define the \textbf{extremal length horosphere}
\[\cQ H_{ \gamma} (\bfh) = \cO(\cP H_{ \gamma} (\bfh)) \subset \QoT_g,\]
which is actually a lift of the $1$-level set of the extremal length function $L_{\bfh \gamma}: \T_g \to \RR_{>0}$ \cite{shshI,GM}.
Using Theorem \ref{thm:ortho homeo}, $\cQ H_{ \gamma}(\bfh)$ is a leaf of the  
unstable foliation.
For any facet of $\TRG(S \setminus \gamma)$ we can also define the \textbf{extremal length horosphere slice}
\[\cQ H_{ \gamma} (\bfh,\Gamma) = \cO(\cP H_{ \gamma} (\bfh,\Gamma)).\]
By construction of $\cP H_{ \gamma}(\bfh,\Gamma)$ and Theorems \ref{thm:ortho homeo} and \ref{thm:conjugacy}, $\cQ H_{ \gamma}(\bfh,\Gamma)$ is actually a leaf of the unstable foliation in the ambient stratum containing it.
Observe that, just as in the hyperbolic setting, $\cQ H_{ \gamma}(\bfh,\Gamma)$ is foliated by twist tori.

\subsection{Affine invariant subvarieties}\label{sec: AISs}
Lemma \ref{lem:SL2Rtorimeasures} tells us that we may  view expanding twist tori as pushforwards of a single torus along the geodesic flow.
By averaging these measures, we can invoke the seminal work of Eskin, Mizkakhani, and Mohammadi on the $P$- and $\PSL_2 \RR$-invariant measures and orbit closures in strata of holomorphic differentials. 
We briefly summarize their results below.
\medskip

An \textbf{affine invariant subvariety} (or \textbf{AIS}) $\AIS \subset \QM_g$ is a $\GL^+_2\RR$-invariant immersed sub-orbifold that is locally cut out by $\CC$-linear equations defined over $\RR$ in period coordinates.
Its unit-area locus $\AIS^1$ is $\PSL_2\RR$-invariant, and comes equipped with an ergodic $\PSL_2\RR$-invariant probability measure $\nu_{\AIS}^1$.
This measure has the property that, if $m$ is Lebesgue measure on $\RR$,  then $dm \, d\nu_\AIS^1$ is locally an affine linear measure on the defining linear subspaces in period coordinates; see \cite[Definition 1.1]{EM}.
While we will not use it directly, we mention that Filip proved that every AIS is an algebraic variety \cite{Filipvarieties}.

The following combines \cite[Proposition 1.3 and Theorem 1.4]{EM} and \cite[Theorems 2.1 and 2.10]{EMM}.

\begin{theorem}\label{thm:EMM}
Let $\cQ^1$ be any component of any stratum of $\QoM_g$.
\begin{enumerate}
    \item (Measure classification): Every $P$-invariant ergodic measure on $\cQ^1$ is also $\PSL_2\RR$-invariant and is $\nu_{\AIS}^1$ for some AIS, of which there are at most countably many.
    \item (Orbit closures): For any $q \in \cQ^1$, the orbit closure $\overline{P \cdot q} = \overline{ \PSL_2\RR \cdot q}$ is (the unit area locus of) an AIS.
    \item (Genericity): For any $q \in \cQ^1$, the orbit $P\cdot q$ equidistributes in its closure $\AIS^1$. That is, for any $f \in C_c(\cQ^1)$ and any $r \in \mathbb{R} \setminus \{0\}$,
\begin{equation*}\label{eqn:QMg P generic}
\lim_{T \to \infty}
\frac{1}{T} \int_0^T \int_0^r 
f\left( g_t u_s (q) \right)
\, ds \, dt
=
\int_{\AIS^1} f \,d\nu_{\AIS}^1.
\end{equation*} 
\end{enumerate}
\end{theorem}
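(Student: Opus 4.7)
The plan is to follow the celebrated strategy of Eskin--Mirzakhani and Eskin--Mirzakhani--Mohammadi, adapting the exponential drift technique of Benoist--Quint from the homogeneous to the inhomogeneous setting. The proof splits naturally into three parts corresponding to the three items, with (1) being by far the deepest.

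For item (1), I fix a $P$-invariant ergodic probability measure $\nu$ on $\cQ$ and seek enough additional invariance to force $\nu$ to be affine. The core idea is the following: take two $\nu$-generic points $q_1, q_2 \in \cQ$ that are close in the strong stable direction but lie on distinct strong unstable leaves, flow them by $u_s$ for large $s$, and then rescale by $g_t$ to keep their distance bounded. In the homogeneous setting, their relative displacement would remain in a controlled subgroup, producing a new direction of invariance via a Birkhoff averaging argument. In strata, this displacement must be tracked via the Kontsevich--Zorich cocycle on the Hodge bundle, and the main technical obstacle---the heart of \cite{EM}---is showing that this divergence can still be factored and measured against the period coordinates well enough to extract extra invariance. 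This requires Forni's bounds on the Hodge norm, nonuniform hyperbolicity of the cocycle, and the Eskin--Masur--Athreya quantitative recurrence of the geodesic flow to avoid the cusps. Showing that the extra invariance thus produced is linear (rather than merely continuous) and combining all such directions of invariance finally identifies $\nu$ as the affine measure on its support.

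For item (2), I take (1) as a black box and study, for a fixed $q \in \cQ$, the weak-$*$ limits of the averages
\[\mu_{q,T} = \frac{1}{T} \int_0^T (g_t u_1)_* \delta_{q} \, dt.\]
Eskin--Masur nonescape of mass for the Teichm\"uller geodesic flow shows that any such limit is a $P$-invariant probability measure, hence by (1) is the affine measure $\nu_\AIS$ on some AIS $\AIS$ containing $q$. The nontrivial step is promoting this to uniqueness of the limit: using the countability of AIS's from (1) together with a linearization argument around each proper sub-AIS, one shows that no orbit can spend a positive fraction of its time near a proper sub-AIS of $\overline{P \cdot q}$, so the entire family $\mu_{q,T}$ converges to $\nu_{\overline{P \cdot q}}$. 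Item (3) then follows by a routine rearrangement, since the double average in the statement is the $s$-average of $\mu_{q,T}$.

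The main obstacle throughout is the absence of any Lie group acting transitively on $\cQ$: every step that would in homogeneous dynamics rest on the commutation relations of an ambient group must be replaced by delicate bundle-theoretic estimates using the Kontsevich--Zorich cocycle and the Avila--Gou\"ezel--Yoccoz norm on strata. My expectation is that the factorization and drift estimates required in step (1) are by far the most substantial obstacle; items (2) and (3) become standard homogeneous-dynamics-style deductions once (1) is in hand.
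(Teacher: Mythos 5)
This theorem is not proved in the paper at all: the authors state explicitly that it ``combines Proposition~1.3 and Theorem~1.4 of \cite{EM} and Theorems~2.1 and~2.10 of \cite{EMM}'' and use it purely as a black-box input (analogous, in their analogy, to Ratner's theorems). So there is no proof in the paper for your proposal to be compared against; the expected answer here is a citation, not an argument.

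That said, as a thumbnail of the strategy underlying the cited works, your sketch is broadly accurate: item~(1) is indeed proved in~\cite{EM} via an inhomogeneous adaptation of the Benoist--Quint exponential drift scheme, with the Kontsevich--Zorich cocycle standing in for the commutation relations of an ambient group, and with Forni's Hodge-norm estimates and Eskin--Masur/Athreya quantitative recurrence supplying the needed hyperbolicity and nondivergence; items~(2) and~(3) are proved in~\cite{EMM} by a linearization/avoidance argument around proper sub-AIS's. What your sketch elides is essentially all of the technical substance of those papers --- in particular, for item~(2) the real work of~\cite{EMM} is the construction of Margulis functions adapted to each proper affine invariant subvariety, not merely an appeal to nondivergence. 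One concrete gap worth flagging: your averages $\mu_{q,T}=\tfrac1T\int_0^T (g_tu_1)_*\delta_q\,dt$ need not have $U$-invariant weak-$*$ limits, since $u_s g_t u_1 = g_t u_{e^{-2t}s+1}$ perturbs the base point by a unipotent of \emph{bounded}, not vanishing, size; one must average over the $u_s$-direction as well (as in item~(3) and as in~\cite{EMM}) to extract a $P$-invariant limit before invoking item~(1). None of this affects the present paper, which takes Theorem~\ref{thm:EMM} as given.
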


Applying Theorem \ref{thm:EMM} to the setting of Lemma \ref{lem:SL2Rtorimeasures}, we deduce that the time-averaged pushforwards of a twist torus equidistribute to its $P$--orbit closure. 

\begin{proposition}\label{prop:avgtoriequid}
For any multicurve $\gamma$ on $S$,
any ribbon graph spine $\rg \in \TRG(S \setminus \gamma)$,
and any cylinder heights $\bfh \in \RR_{>0}^{n}$ with $\bfl \cdot \bfh = 1$, we have that
\[\frac{1}{T}\int_0^T (g_t)_* \left(\nu_{\gamma}(\rg, \bfh)\right) \, dt \to \nu_{\AIS}^1\]
as $T \to \infty$, where $\nu_{\AIS}^1$ is the unique affine probability measure supported on
$\AIS^1 := \overline{P \cdot \cQ\TT_{\gamma}(\rg, \bfh)}.$
\end{proposition}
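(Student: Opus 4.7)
The plan is to extract this from Theorem \ref{thm:EMM} and Lemma \ref{lem:SL2Rtorimeasures} in two stages. First I would identify $\AIS$ as a single AIS and show that $\nu_\gamma(\rg,\bfh)$-almost every point $q$ of the torus satisfies $\overline{P\cdot q} = \AIS$. Then I would trade the time average against $g_t$ for a mixed horocycle-and-time average via the $U$-invariance of $\nu_\gamma(\rg,\bfh)$, apply the genericity statement Theorem \ref{thm:EMM}(3) pointwise, and finish with bounded convergence.

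For the first step, I begin by noting that for every $q \in \cQ\TT_\gamma(\rg, \bfh)$, Theorem \ref{thm:EMM}(2) produces an AIS $\AIS_q := \overline{P \cdot q}$, and the countability clause of Theorem \ref{thm:EMM}(1) ensures that only countably many AISs arise this way. The torus is therefore covered by the countably many sets $\AIS_q \cap \cQ\TT_\gamma(\rg, \bfh)$. By the discussion in \S\ref{sec: flat tt and horo}, the flat twist torus is locally affine in period coordinates (cut out by fixing $\Im(z) = h_i$ on horizontal cylinder periods and $z = \ell(e)$ on non-cylinder spine edges), while AISs are likewise cut out locally by $\RR$-linear equations. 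Hence in any period chart each $\AIS_q$ meets the torus either in the full chart slice or in a proper affine, hence Lebesgue-null, subset; connectedness of the torus then forces one of these alternatives to hold globally. Since $\nu_\gamma(\rg,\bfh)$ is a probability measure, some $\AIS_{q_0}$ must contain the whole torus. That $\AIS_{q_0}$ is $P$-invariant and closed, so it contains $\overline{P\cdot \cQ\TT_\gamma(\rg,\bfh)} = \AIS$; it is also trivially contained in $\AIS$, so it equals $\AIS$. In particular, $\AIS$ is itself an AIS, and $\overline{P\cdot q} = \AIS$ for $\nu_\gamma(\rg,\bfh)$-a.e.\ $q$, the exceptional set being the countable union of Lebesgue-null intersections of the torus with strictly smaller sub-AISs.

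For the second step, I would apply Theorem \ref{thm:EMM}(3) at each such generic $q$: for any $f \in C_c(\cQ)$ and any fixed $r > 0$,
\[\frac{1}{Tr}\int_0^T\!\int_0^r f(g_t u_s q)\,ds\,dt \longrightarrow \int f\,d\nu_\AIS \quad \text{as } T \to \infty.\]
Using the $U$-invariance of $\nu_\gamma(\rg,\bfh)$ from Lemma \ref{lem:SL2Rtorimeasures}, substituting $q \mapsto u_s q$ inside the $\nu_\gamma$-integral, and averaging over $s \in [0, r]$, I would then rewrite
\[\left\langle \tfrac{1}{T}\int_0^T (g_t)_*\nu_\gamma(\rg,\bfh)\,dt,\,f\right\rangle = \int \tfrac{1}{Tr}\int_0^T\!\int_0^r f(g_t u_s q)\,ds\,dt\;d\nu_\gamma(\rg,\bfh)(q).\]
The inner integrand is uniformly bounded by $\|f\|_\infty$, so bounded convergence sends the left side to $\int f\,d\nu_\AIS$. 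Because $\nu_\AIS$ is a probability measure, testing against a sequence $f_n \nearrow 1$ rules out escape of mass, upgrading weak-$*$ convergence on $C_c(\cQ)$ to weak-$*$ convergence of probability measures.

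The main obstacle is the uniformity assertion in the first step: it is not automatic that a single AIS contains the entire flat twist torus, since a priori different points could have different orbit closures, and $\AIS$ is defined only as a $P$-invariant closed set rather than as an orbit closure of a point. Resolving this comes down to the compatibility of the real affine structure of $\cQ\TT_\gamma(\rg,\bfh)$ in period coordinates with the $\CC$-linear structure defining AISs. Once this compatibility is in place, the remainder of the argument is a routine Fubini packaging of the pointwise content of Theorem \ref{thm:EMM}(3).
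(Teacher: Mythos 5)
Your proposal is correct and follows essentially the same route as the paper's proof: establish that $\nu_\gamma(\rg,\bfh)$-a.e.\ $q$ has $\overline{P\cdot q}=\AIS$ by combining countability of AISs (Theorem \ref{thm:EMM}(1)) with the fact that $\cQ\TT_\gamma(\rg,\bfh)$ is affine in period coordinates, then feed the pointwise genericity of Theorem \ref{thm:EMM}(3) through Fubini and bounded convergence using $U$-invariance. The one stylistic difference is that the paper first treats the case of $\QQ$-independent moduli $h_i/\ell_i$ (where the torus is $U$-minimal, so \emph{every} point is generic) before giving the measure-theoretic argument in general, whereas you dispense with the case split and run the affine/countability argument uniformly --- a cleaner packaging of the same idea.
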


\begin{proof}
Since $\gamma$, $\rg$, and $\bfh$ are fixed, throughout the proof we will denote $g_t \cdot  \cQ\TT_{\gamma}(\rg, \bfh)$ by $\cQ\TT^t$ and the associated twist torus measure on $\cQ\TT^t$ by $\nu^t$.

We first consider the (generic) situation in which the moduli $h_i/\ell_i$ of the horizontal cylinders are linearly independent over $\QQ$.
In this case, the torus $\cQ\TT^0$ is a minimal set for the horocycle flow and so the $U$ orbit closure of any point is the entire torus.
We therefore see that
\[\overline{ P \cdot q} = \overline{ P \cdot \cQ\TT^0} = \AIS^1\]
for every $q \in \cQ\TT^0.$

Invoking Theorem \ref{thm:EMM}, this implies that the F{\"o}lner sets $\{ g_t u_s q : t \in [0, T], s \in [0, r]\}$ based at any $q \in \cQ\TT^0$ equidistribute to $\nu_{\AIS}^1$ as $T \to \infty$. Integrating against the twist torus measure and applying Lemma \ref{lem:SL2Rtorimeasures} yields
\[\frac{1}{T} \int_0^T \int_0^r \int_{\cQ\TT^0}
f( g_t u_s q) \, d \nu^0(q) \, ds \, dt
=
\frac{1}{T} \int_0^T \int_{\cQ\TT^t}
f(q) \, d \nu^t(q) \, dt
\to
\int_{\AIS^1} f \, d\nu_{\AIS}^1\]
for every $f \in C_c(\cQ^1)$, where $\cQ^1$ is unit area locus of the ambient stratum. This is what we wanted to show.\bigskip

In general, the twist torus $\cQ\TT^0$ may not be minimal for the horocycle flow. In this case, we show that the $P$--orbit closure of $\nu^{0}$--almost every point is $\AIS^1$; the rest of the argument then proceeds as above. 

On any $q \in \cQ\TT^0$, choose a basis for relative homology consisting of horizontal saddle connections and saddles which connect the boundary components of the cylinders of $q$; this gives a local period coordinate chart for the entire stratum $\cQ$ (not just its unit area locus). 
Fix a finite collection\footnote{
In fact, one can cover a lift of the twist torus to $\QT_g$ with a single chart of this form, but we use a finite collection of restricted charts so that we do not need to worry about the $\Stab(\gamma)$ action.}
of such charts that covers $\cQ\TT^0$.
Let $\cN \subsetneq \AIS$ be any affine invariant subvariety (again, not just its unit area locus) not containing $\cQ\TT^0$.
In any of the period charts constructed above, $\cQ\TT^0$ is cut out by affine equations of the form $\hol(s)=\ell_i \in \RR$ (specifying the length of the horizontal saddle connections) and equations of the form $\Im(\hol(s)) = h_i$ (specifying the heights of the cylinders); compare \S\ref{sec: flat tt and horo}.
On the other hand, in period coordinates $\cN$ is a union of finitely many linear subspaces, each of which is cut out by linear equations with real coefficients. Therefore, we see that in each chart, 
$\cQ\TT^0 \cap \cN$ is a union of finitely many proper subspaces, hence has $\nu^0$ measure $0$. Moreover, we note that each subspace must be tangent to a proper subtorus of $\cQ\TT^0$, else its closure would be the entirety of $\cQ\TT^0$.

As the $P$--orbit closure of any $q$ is always an AIS and there are only countably many such (Theorem \ref{thm:EMM}.1), we see that 
\[\{q \in \cQ\TT^0 : \overline{Pq} \neq \AIS^1\}\]
is a union of countably many proper subtori of $\cQ\TT^0$. In particular, this set has measure zero with respect to $\nu^0$, and so the $P$--orbit closure of almost every point of $\cQ\TT^0$ is $\AIS^1$.
\end{proof}

\begin{remark}\label{rmk: subtori}
We note that the above proof also works for any subtorus $\cQ\TT' \subset \cQ\TT^0$; the point is just that the intersection of any AIS $\cN$ that does not contain $\cQ\TT'$ with the torus is always a union of subtori.
Thus the generic point of $\cQ\TT'$ equidistributes under the $P$ action to $\overline{ P \cdot \cQ\TT'}$.
\end{remark}

\subsection{Individual pushforwards}\label{subsec: individual pushforwards}
Forni proved that whenever one has a nice $\SL_2\RR$ action and equidistribution of the form above, one can remove the $T$-average in the above theorem at the cost of removing a small set of times. We recall that a set $Z \subset \RR$ is said to have {\bf zero (upper) density} if 
\[\limsup_{n \to \infty} \frac{\text{Leb}( Z \cap [0,n]) }{n} = 0\] 
and that a sequence $(a_t)$ converges as $t \to \infty$ outside of $Z$ if it converges along all sequences tending to $\infty$ that are eventually disjoint from $Z$.

\begin{theorem}[Theorem 1.1 of \cite{Forni}]\label{thm:Forni}
Let $\cQ^1$ be a stratum of unit area quadratic differentials and suppose that $\nu$ is a $U$-invariant probability measure on $\cQ^1$. If the limit
\[\nu_\infty = \lim_{T \to \infty} \frac1T \int_0^T (g_t)_* \nu \, dt \]
exists and is $U$-ergodic, then there exists a set $Z \subset \RR$ of zero density such that $(g_t)_* \nu \to \nu_{\infty}$ as $t \to \infty$ outside of $Z$.
\end{theorem}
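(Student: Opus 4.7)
The plan is to upgrade the assumed weak-$*$ convergence of measures on $\cQ$ to a weak-$*$ convergence in the space $\mathrm{Prob}(\cQ)$ of Borel probability measures on $\cQ$. Introduce the empirical measure
\[ m_T := \frac{1}{T}\int_0^T \delta_{(g_t)_* \nu}\, dt \]
on $\mathrm{Prob}(\cQ)$; unwinding the definition against compactly supported continuous test functions on $\cQ$, the barycenter of $m_T$ is precisely the Ces\`aro mean $\mu_T = \frac{1}{T}\int_0^T (g_t)_*\nu\, dt$, and by hypothesis $\mu_T \wstarto \nu_\infty$. The main obstacle is to prove tightness of $\{m_T\}$ in $\mathrm{Prob}(\mathrm{Prob}(\cQ))$: this amounts to a quantitative no-escape-of-mass statement saying that the Lebesgue density of times $t \in [0, T]$ for which $(g_t)_* \nu$ leaves a prescribed weak-$*$ compact subset of $\mathrm{Prob}(\cQ)$ must be uniformly small in $T$. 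I would establish this via Eskin--Masur-type integrability bounds on an $\alpha$-function on $\cQ$ along the Teichm\"uller geodesic flow, applied to the finite $U$-invariant measure $\nu$.

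Granted tightness, pass to any weak-$*$ subsequential limit $m_\infty$ of $\{m_T\}$; its barycenter is still $\nu_\infty$. Any $\nu' \in \supp(m_\infty)$ is a weak-$*$ limit of measures $(g_{t_n})_* \nu$, each of which is $U$-invariant (using the commutation $u_s g_t = g_t u_{e^{-2t}s}$ together with $U$-invariance of $\nu$), so $\nu'$ itself is $U$-invariant, and $m_\infty$ is concentrated on $U$-invariant probability measures. Composing $m_\infty$ with the ergodic decomposition produces a probability measure $\widetilde m_\infty$ supported on $U$-ergodic probability measures whose barycenter remains $\nu_\infty$; since $\nu_\infty$ is itself $U$-ergodic, uniqueness of ergodic decomposition forces $\widetilde m_\infty = \delta_{\nu_\infty}$. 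Unraveling, for $m_\infty$-almost every $\nu'$ the ergodic decomposition of $\nu'$ is $\delta_{\nu_\infty}$, i.e.\ $\nu' = \nu_\infty$. Hence $m_\infty = \delta_{\nu_\infty}$, and as every weak-$*$ accumulation point of $\{m_T\}$ coincides with $\delta_{\nu_\infty}$, we obtain $m_T \wstarto \delta_{\nu_\infty}$.

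Finally, fix a bounded metric $d$ on $\mathrm{Prob}(\cQ)$ compatible with the weak-$*$ topology. For each $\varepsilon > 0$, testing $m_T \wstarto \delta_{\nu_\infty}$ against the bounded continuous function $\nu' \mapsto \min(1, d(\nu', \nu_\infty)/\varepsilon)$ shows that $Z_\varepsilon := \{t > 0 : d((g_t)_* \nu, \nu_\infty) > \varepsilon\}$ has density zero in $[0, \infty)$. A standard diagonal construction, choosing $T_n \to \infty$ rapidly enough and setting $Z := \bigcup_n (Z_{1/n} \cap [T_n, T_{n+1}])$, combines these into a single set $Z$ of zero upper density outside of which $(g_t)_*\nu \to \nu_\infty$ as $t \to \infty$.
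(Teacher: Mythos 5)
The paper does not prove this theorem itself; it is quoted verbatim as Theorem 1.1 of Forni's paper, so there is no ``paper proof'' to compare against. Evaluating your argument on its own merits: the strategy (lift to empirical measures $m_T$ on $\mathrm{Prob}(\cQ)$, show every sub-limit is $\delta_{\nu_\infty}$ via $U$-invariance of $(g_t)_*\nu$ plus ergodicity of $\nu_\infty$ and uniqueness of the ergodic decomposition, then extract a density-zero exceptional set by a Koopman--von Neumann diagonal argument) is sound and is essentially the standard route; it is also close to Forni's actual proof. Each step holds: $U$-invariance of $(g_t)_*\nu$ from $u_s g_t = g_t u_{e^{-2t}s}$; weak-$*$ closedness of the set of $U$-invariant probability measures (once mass is controlled); and barycenter uniqueness for measures concentrated on ergodic measures.

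The one place you work harder than necessary is tightness of $\{m_T\}$. You do not need Eskin--Masur $\alpha$-function estimates here: tightness follows already from the hypothesis that the Ces\`aro averages $\mu_T$ converge weakly to the probability measure $\nu_\infty$. Indeed, by the Portmanteau theorem, for any open relatively compact $U\subset\cQ$ one has $\limsup_T \mu_T(\cQ\setminus U)\le \nu_\infty(\cQ\setminus U)$; choosing an exhaustion $U_n$ with $\nu_\infty(\cQ\setminus U_n)<4^{-n}$, Markov's inequality applied to $\mu\mapsto\mu(\cQ\setminus U_n)$ under $m_T$ produces uniformly small $m_T$-mass outside the weak-$*$ compact set $\{\mu : \mu(\cQ\setminus U_n)\le 2^{-n}/\varepsilon\ \forall n\}$. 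Equivalently, one can run the whole argument in the compact space of sub-probability measures on $\cQ$ and observe afterwards that the barycenter of any sub-limit $m_\infty$ is the probability measure $\nu_\infty$, which forces $m_\infty$-a.e.\ $\mu$ to have full mass. Either route is cleaner and keeps the proof self-contained; your appeal to quantitative nondivergence is not wrong, but it imports an ingredient that the hypotheses already render superfluous.
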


\begin{remark}
    Forni conjectures that $Z$ is empty in the case that $\nu_\infty$ is $P$-invariant.
\end{remark}

Applying this to our context, we immediately get the following strengthening of Proposition \ref{prop:avgtoriequid}:

\begin{corollary}\label{cor:densetoriequid}
With all notation as in Proposition \ref{prop:avgtoriequid},
there is a set $Z \subset \RR$ of zero density such that 
\[(g_t)_*\nu_{\gamma}(\rg, \bfh) \to \nu_{\AIS}^1\]
as $t \to \infty$ outside of $Z$.
\end{corollary}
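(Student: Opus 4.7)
The plan is to apply Forni's theorem (Theorem \ref{thm:Forni}) with $\nu = \nu_\gamma(\rg,\bfh)$ and candidate limit $\nu_\infty = \nu_\AIS$. This requires verifying three hypotheses: (a) $\nu$ is $U$-invariant, (b) the Ces\`aro averages $\frac{1}{T}\int_0^T (g_t)_* \nu\,dt$ converge weak-$*$ to $\nu_\infty$, and (c) $\nu_\infty$ is $U$-ergodic. The first two are immediate from the earlier results in this section: (a) is the $U$-invariance clause of Lemma \ref{lem:SL2Rtorimeasures}, while (b) is precisely the content of Proposition \ref{prop:avgtoriequid}.

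The only real work is therefore verifying (c). Since $\nu_\AIS$ is $\PSL_2\RR$-invariant by construction, and since by Theorem \ref{thm:EMM}(1) every affine measure is $P$-ergodic, any $\PSL_2\RR$-invariant measurable function on $\AIS$ is in particular $P$-invariant and hence $\nu_\AIS$-almost everywhere constant; this upgrades $P$-ergodicity to $\PSL_2\RR$-ergodicity. Moore's ergodicity theorem (equivalently, the Mautner phenomenon) then yields that the non-compact one-parameter subgroup $U$ acts ergodically whenever the ambient $\PSL_2\RR$ does, giving $U$-ergodicity of $\nu_\AIS$.

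Once (a)--(c) are in hand, Theorem \ref{thm:Forni} produces a zero-density set $Z \subset \RR$ such that $(g_t)_* \nu_\gamma(\rg,\bfh) \to \nu_\AIS$ as $t \to \infty$ outside $Z$, which is exactly the conclusion of the corollary. No serious obstacle is expected here: this is essentially a direct appeal to Forni's theorem once its hypotheses have been verified for the twist torus measure, with all of the substantive dynamical input already packaged into Proposition \ref{prop:avgtoriequid}.
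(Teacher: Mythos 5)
Your approach is the same as the paper's: the Corollary is deduced by applying Forni's theorem (Theorem \ref{thm:Forni}) to $\nu = \nu_\gamma(\rg,\bfh)$, using Lemma \ref{lem:SL2Rtorimeasures} for $U$-invariance and Proposition \ref{prop:avgtoriequid} for convergence of the Ces\`aro averages to $\nu_\AIS$. The paper treats this as immediate and does not spell out the $U$-ergodicity hypothesis; you are right that this is the only substantive thing to check, and your conclusion is correct, but two small points in your write-up of part (c) deserve attention. First, Theorem \ref{thm:EMM}(1) as stated asserts the \emph{converse} of what you cite: it says every $P$-invariant ergodic measure is affine, not that every affine measure is $P$-ergodic. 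The fact you actually want---that $\nu_\AIS$ is $\PSL_2\RR$-ergodic (hence a fortiori $P$-ergodic and, by Mautner, $U$-ergodic)---is established in \cite{EM} but is not a literal restatement of item (1); moreover the intermediate ``upgrade from $P$-ergodicity to $\PSL_2\RR$-ergodicity'' is automatic (a larger acting group has fewer invariant sets) and so needs no argument at all. Second, ``Moore's ergodicity theorem'' in its usual form concerns homogeneous spaces $\Gamma\backslash G$, and strata of quadratic differentials are not homogeneous; the correct tool here is the Mautner phenomenon applied to the Koopman representation of $\PSL_2\RR$ on $L^2(\nu_\AIS)$, which shows a $U$-invariant vector is automatically $\PSL_2\RR$-invariant, and hence a.e.\ constant by $\PSL_2\RR$-ergodicity. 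With these citations tightened, your argument is exactly what the paper leaves implicit.
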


Forni also proved that if $\nu$ is especially nice, then the set $Z$ is empty. Recall that a {horospherical measure} $\nu$ is one that is supported in a leaf of the unstable foliation of $g_t$ on a stratum $\cQ^1\subset \QoM_g$, is absolutely continuous with respect to the canonical affine measure on the leaf, has continuous density, and has conditional measures along horocycle orbits equal to the restriction of one-dimensional Lebesgue measures.

\begin{proof}[Proof of Theorem \ref{thm: horospheres equi}]
From the definitions and the fact that $\cO$ is a piecewise real-analytic homeomorphism on unstable leaves that takes earthquake flow to horocycle flow in a time-preserving fashion (Theorems \ref{thm:ortho homeo} and \ref{thm:conjugacy}),
we see that if  $\sigma$ is a horospherical probability measure supported on $\cP H_{ \gamma}(\bfh)$ (in the sense of \S\ref{sec:horospheres}), then $\cO_*\sigma$ is horospherical with support $\cQ H_{ \gamma}(\bfh)$ (in the sense above).
Similarly, if $\sigma$ is a horospherical slice probability measure supported on $\cP H_{ \gamma}(\bfh,\Gamma) $, then $\cO_*\sigma$ is supported on $\cQ H_{ \gamma}(\bfh,\Gamma)$ and is horospherical for the ambient stratum component. 

Theorem 1.6 of \cite{Forni}\footnote{
Theorem 1.6 in Forni's paper is stated for the stable foliation, but clearly applies to the unstable.} (or \cite[Proposition 3.2]{EMM:effscc}) ensures $(g_t\circ\cO)_* \sigma$ converges weak-$*$ as $t\to \infty$ to the Masur--Smillie--Veech probability measure $\nu_\infty^1$ on the ambient stratum component of $\QoM_g$ containing the horosphere (slice).
Applying Theorem \ref{thm: pull back equidistribution} then gives that $(\Dil_t)_*\sigma$ converges weak-$*$ to $\cO^*\nu_{\cQ}^1$.
\end{proof}

\section{Twist tori built from pants decompositions}\label{sec:identifypants}

In this section, we identify the orbit closures of twist tori built out of pants decompositions, proving Theorem \ref{mainthm}.
The key tools are some properties of affine invariant subvarieties 
and Wright's notion of (cylinder) rank from \cite{Wright:cylinder}, discussed below. Throughout this section and the next, except when we discuss equidistribution, we will always work with an entire stratum/AIS, not just its unit area locus.

\subsection{Cylinders and rank}\label{subsec:cylrank}
Let $\mathcal N$ be an AIS in a stratum of (squares of) abelian differentials $\mathcal H = \cQ\M_g(\sing; +1)$.
Then $\cN$ is an immersed submanifold of $\mathcal H$ that is locally a $\CC$-linear subspace in period coordinates for $\mathcal H$ defined over $\RR$ (see \S\ref{sec: AISs}).
For $\omega \in \cN$, we have 
\[T_{\omega}\cN \le T_{\omega} \mathcal H\cong H^1(\omega, Z(\omega);\CC).\]
There is a natural linear projection \[p: H^1(\omega, Z(\omega);\CC) \to H^1(\omega;\CC).\]

\begin{definition}\label{def: rank}
    The \textbf{rank} of $\cN$ is the half the (complex) dimension of $p(T_{\omega}\cN)\le H^1(\omega;\CC)$.
The \textbf{rel} of $\cN$ is the (complex) dimension of $T_{\omega}\cN  \cap \ker p$.
\end{definition}

%\begin{remark}   
%    By \cite[Theoerem 1.4]{AEM:symplectic},  $V$ is symplectic, hence has even dimension and the rank of $\cN$ is the (real) dimension of a Lagrangian subspace of $V$.  
%\end{remark}

Rank and rel are among the most important invariants of an AIS.
The proof of the following uses only the definitions and basic properties.

\newcommand{\rank}{\textrm{rank}}
\newcommand{\rel}{\textrm{rel}}
\begin{lemma}\label{lem: rank plus rel}
    Suppose $\cN$ and $\cN'$ are AISs in a stratum of squares of abelian differentials.  Assume that  $\cN\subset \cN'$ and 
    \[\rank(\cN) + \rel (\cN) \ge \rank (\cN')+\rel(\cN'). \]
    Then $\cN = \cN'$.
\end{lemma}

\begin{proof}
Let $\omega \in \cN$ be a smooth point.\footnote{It is possible that $\cN$ is entirely contained in the orbifold locus of $\cH$. In this case, by ``smooth'' we mean a point whose local orbifold group is the smallest possible. See, e.g., \cite[\S2.3]{CSW}. In any case, our arguments only use the dimension of the relevant spaces, not their vector space structure.}
Then $T_\omega\cN\le T_\omega\cN'.$
Directly from this containment, we deduce
\[\rank (\cN)\le \rank (\cN') \text{ and }\rel(\cN)\le \rel(\cN').\]
    By hypothesis and the fact that rank and rel are never negative, we conclude
    \[\rank(\cN) = \rank(\cN')\text{ and } \rel(\cN) = \rel(\cN').\]
    The (complex) dimension of $\cN$ is computed as 
    \[\dim(\cN) = 2\rank(\cN) +\rel(\cN),\]
    and similarly for $\cN'$.
    Thus $\dim(\cN) = \dim(\cN')$, which implies that $\cN = \cN'$ near $\omega$.  By $\GL(2,\RR)$-invariance, $\cN = \cN'$ globally.
\end{proof}

Wright introduced the notion of rank in \cite{Wright:cylinder} and related it to the dimension of the space spanned by the directions tangent to certain \textbf{cylinder deformations}, which we recall below. 

For a given $\omega\in \cN\subset  \cH$, let $\mathcal C(\omega)$ be the set of maximal, singularity free horizontal cylinders in $\omega$.  Each element $C_i \in \mathcal C(\omega)$ is an isometrically embedded open Euclidean cylinder with positive height, foliated by closed, nonsingular oriented leaves of the imaginary foliation of $\omega$ which are all homotopic to its core curve $\gamma_i$.
%with height $h_i>0$ and circumference $\ell_i>0$, and $C_i$ is foliated by closed nonsingular leaves of the .
%The \textbf{modulus} of $C_i$ is $h_i/\ell_i$.
Recall that  
\[u_s = \begin{pmatrix}
1 & s\\
0 & 1
\end{pmatrix} \in \SL_2\RR,\]
and define $u_s^{C_i}q\in \cH(\sing)$ to be the abelian differential obtained by applying the horocycle flow $u_s$ to $C_i$ but not the rest of $\omega$.
Denote by $\eta_i \in T_\omega \mathcal H(\sing)$ the tangent vector to the path $s\mapsto u_{s}^{C_i}\omega$ at $s=0$, and observe that $\eta_i \in H^1(\omega, Z(\omega);\RR)$. In fact, $\eta_i$ is Poincar\'e--Lefschetz dual to the homology class in $H_1(\omega\setminus Z(\omega); \RR)$ defined by $h_i\gamma_i$, where $h_i$ is the height of the cylinder $C_i$; see \cite[\S4]{MW_fullrank}.

\newcommand{\Twist}{\mathrm{Twist}}
The fact that $\cN$ is cut out by $\RR$-linear equations implies $T_\omega\cN = T_\omega^\RR\cN\otimes \CC$, where $T_\omega^\RR\cN\le H^1(\omega,Z(\omega); \RR)$.
The \textbf{twist space} of $\cN$ at $\omega$ is spanned by those cylinder deformations which remain in $\cN$:
\[\Twist(\omega, \cN):= \langle \eta_i : C_i \in \mathcal C(\omega)\rangle \cap T_\omega^\RR\cN. \]
We deduce the following from 
\cite[Section 8]{Wright:cylinder}. %; an important ingredient is that 

\begin{proposition}\label{prop: dimension of twist}
    Let $\cN$ be an AIS in a stratum of squares of abelian differentials  and suppose $\omega \in \cN$ is horizontally periodic. Then 
    \[\dim \Twist(\omega,\cN) \le \rank(\cN) + \rel(\cN),\]
    with equality if no other $\omega' \in \cN$ has more cylinders than $\omega$. 
\end{proposition}

\begin{proof}
    Corollary 8.11 of \cite{Wright:cylinder} asserts that, for any $\omega \in \cN$,
    \[\dim T_\omega^{\RR}\cN - \dim \Twist(\omega,\cN) \ge \rank(\cN).\]
    Combining Lemma 8.6 and Lemma 8.8 of \cite{Wright:cylinder} gives equality if $\omega$ has the maximal number of horizontal cylinders among all $\omega' \in \cN$.
    Using $\dim T_\omega^\RR\cN=2\rank(\cN)+\rel(\cN)$, we obtain the desired conclusion.
\end{proof}

Lemma 8.11 of \cite{Wright:cylinder}, used in the proof of Proposition \ref{prop: dimension of twist}, relies on the fact that $p(T_\omega^\RR\cN)$ is a symplectic subspace of $H^1(\omega;\RR)$ with respect to the intersection pairing \cite[Theorem 1.4]{AEM:symplectic}.  The rank can then be interpreted as the (real) dimension of a Lagrangian subspace of $p(T_\omega^\RR\cN)$.
Since $\eta_i$ is Poincar\'e--Lefschetz dual to $h_i\gamma_i$, and the curves $\{\gamma_i\}$ are all disjoint, one deduces that $p(\Twist(\cN, \omega))$ is isotropic.   We record the following consequence for use later; see also \cite[Section 8]{Wright:cylinder}.

\begin{proposition}\label{prop:twistspace lowerbd}
Let $\omega \in \cN$ be horizontally periodic. Then
\[\dim_{\RR} p(\Twist(\omega, \cN)) \le \rank(\cN).\]
\end{proposition}

Combining Lemma \ref{lem: rank plus rel} and Proposition \ref{prop: dimension of twist} allows us to easily identify orbit closures of twist tori associated to pants decompositions. 

\begin{corollary}\label{cor: identify AIS}
	Let $\cQ$ be a component of a stratum of holomorphic quadratic differentials on a closed genus $g$ surface and let $\AIS$ be an AIS. Suppose that $\AIS$ contains a surface $q$ with $3g-3$ horizontal cylinders and that all cylinder deformations of these horizontal cylinders remain in $\cL$.  Then $\cL = \cQ$.     
\end{corollary}

\begin{proof}
No holomorphic quadratic differential on a genus $g$ surface can have more than $3g-3$ horizontal cylinders, as their core curves must all be disjoint and homotopically distinct.
Thus $q$ contains as many horizontal cylinders as possible for any surface in $\cQ$ (hence for any surface in $\AIS$).
By hypothesis, all of these cylinders can be twisted individually to stay in $\AIS$.

The loci $\widehat \AIS \subset \widehat \cQ$ of holonomy double covers of surfaces in $\AIS$ and $\cQ$, respectively, are AISs in a stratum of abelian differentials. 
Every flat cylinder on $q$ lifts to two cylinders on $\hat q$,\footnote{This can be seen because the holonomy of a curve is the same as its winding number mod 2 with respect to the horizontal line field.}
hence the holonomy double cover $\hat q$ of $q$ has the most horizontal cylinders among all surfaces in $\widehat \cQ$.
The cylinder deformations tangent to $\cL$ at $q$ lift to (sums of) cylinder deformations tangent to $\widehat \cL$ at $\hat q$, 
so
\[\dim \Twist(\hat q, \widehat \AIS) = 3g-3\]
(dimension is preserved because the kernels of transfer maps on (co)homology are always torsion).
%Use PLD to realize $H^1(q, Z) = H_1(q \ Z) \to H_1(hat q \ hat Z) = H^1(hat q, hat Z)$
Using Proposition \ref{prop: dimension of twist} twice, once for $\widehat \AIS$ and once for $\widehat \cQ$,
\[\rank (\widehat \AIS) + \rel (\widehat \AIS) = 3g-3 = \rank (\widehat \cQ) +\rel (\widehat \cQ).\]
By Lemma \ref{lem: rank plus rel}, we conclude that $\widehat \AIS = \widehat \cQ$, hence that $\AIS = \cQ$, establishing the result.
\end{proof}

\subsection{Proof of the main theorem}\label{subsec:proof of mainthm}
At this point, we can assemble and prove a more detailed version of our main Theorem \ref{mainthm}.  For the reader's convenience, we quickly recall the setup and the statement below.

Given a pants decomposition $\gamma$ of a topological surface $S$ and a length vector $\bfl \in\RR_{>0}^{3g-3}$.
Suppose $X$ is a hyperbolic surface glued together out of geodesic pants curves with lengths given by $\bfl$, and let $\rg = \Sp(X\setminus \gamma)$ be the metric ribbon graph from \S\ref{subsec: arcs and rgs}.
Let $\nabla_\gamma(\bfl)$ denote the number of components of $S \setminus \gamma$ that have property \hyperref[a+b=c]{$(\nabla)$} from the introduction, i.e., such that the lengths of two of its boundaries sum to the length of the third.
This corresponds to the corresponding component of the ribbon graph $\rg$ having a 4-valent vertex and being orientable (in the sense of Definition \ref{def: orientable}).

Recall our notation that if $\cQ$ is a component of a stratum of quadratic differentials, then  $\nu_\cQ^1$ is the Masur--Smillie--Veech probability measure on its unit area locus $\cQ^1$ (\S\ref{subsec:horocycle flow}).

\begin{theorem}\label{thm: mainthm}
Fix any pants decomposition $\gamma$, any $\bfl \in \RR^{3g-3}_{>0}$, and any $\bfh \in \RR^{3g-3}_{>0}$ such that $\bfl \cdot \bfh = 1$.
There is a set $Z \subset \RR$ of zero density such that 
\[ \lim_{\substack{t \to \infty \\ t \notin Z}}
\mu_\gamma(e^t \bfl, e^{-t} \bfh) = \cO^*\nu_\cQ^1,\]
where $\cQ$ is the (connected) stratum $\QM_g(1^{4g-4-2\nabla_\gamma(\bfl)}, 2^{\nabla_\gamma(\bfl)}; \varepsilon)$, and $\varepsilon = 1$ if and only if $\nabla_\gamma(\bfl) = 2g-2$ and $(\gamma, \rg)$ is jointly orientable.
\end{theorem}

\begin{proof}[Proof of Theorem \ref{mainthm} given Theorem \ref{thm: mainthm}]
    We only need to justify the two numbered items.
    Mirakhani proved that $\cO^*\nu_\cQ^1 = \mu_{\Mirz}/b_g$ when $\cQ$ is the principal stratum of quadratic differentials \cite[Theorems 1.3 and 1.4]{MirzEQ}.
    This gives item (1) of Theorem \ref{mainthm}.

    For item (2), note that for any stratum component $\cQ$, the measure $\nu_\cQ^1$ is ergodic for the $P$-action. 
    Any two distinct components of strata yield distinct Masur--Smillie--Veech measures. By ergodicity and Theorem \ref{thm:conjugacy}, if $\cQ$ is not the principal stratum, then $\cO^*\nu_\cQ^1$ and $\mu_{\Mirz}$ are mutually singular. 
\end{proof}

\begin{proof}[Proof of Theorem \ref{thm: mainthm}]
Form the twist tori $\cP\TT_\gamma(\mathsf{Y}, \bfh)$ and $\cQ\TT_\gamma(\mathsf{Y}, \bfh)$ as in \S\S\ref{subsec: hyp tt} and \ref{sec: flat tt and horo}, as well as their associated measures $\mu_\gamma(\mathsf{Y}, \bfh) = \mu_\gamma(\bfl, \bfh)$ and $\nu_\gamma(\mathsf{Y}, \bfh)$.
Note that the edge lengths of $\mathsf Y$ are linear functions of the lengths $\bfl$ (see \cite[Figure 1]{shshI}),  so rescaling $\bfl$ corresponds to rescaling $\rg$.
From \S\ref{sec: flat tt and horo} and Lemma \ref{lem:SL2Rtorimeasures}, recall that $\cO$ interchanges these tori and that $g_t$ takes twist tori to twist tori \eqref{eqn:pushtorigeo}:
\[\cO_*\mu_\gamma(\mathsf{Y}, \bfh)
=\nu_\gamma(\mathsf{Y}, \bfh)
\hspace{ 2ex}
\text{and}
\hspace{ 2ex}
(g_t)_* \nu_{\gamma}(\rg, \bfh) =
\nu_{\gamma}(e^t \rg, e^{-t} \bfh).\]

\begin{claim}\label{clm: identity twist tori pants}
With all notation as above, we have
\[\overline {P\cdot \cQ \mathbb T_\gamma(\rg,\bfh)} = \QoM_g(1^{4g-4-2\nabla_\gamma(\bfl)}, 2^{\nabla_\gamma(\bfl)};\varepsilon) \]
where $ \varepsilon = +1$ if $(\gamma,\rg)$ is jointly orientable, and $\varepsilon=-1$ if not.  
\end{claim}

\begin{proof}
The spine $\mathsf{Y}$ has $2g-2$ components, $\nabla_\gamma(\bfl)$ of which have a unique 4-valent vertex and the rest of which have two trivalent vertices.
Thus, we can identify the ambient stratum component containing the twist torus using two facts: that $k$-pronged singularities of $\cO_\gamma(X)$ correspond to zeros of order $k-2$ of $\cO(X,\bfh\cdot\gamma)$ (Theorem \ref{thm:conjugacy}) and that joint orientability of $(\gamma,\rg)$ is equivalent to $\cO(X,\bfh\cdot\gamma)$ being the square of an abelian differential (Lemma \ref{lem: co-orient}).
Connectivity of these strata is a consequence of the classification of connected components of strata \cite{KZstrata,Lanneau}.

The argument presented in the proof of Proposition \ref{prop:avgtoriequid} gives that $\overline{P\cdot \cQ \mathbb T_\gamma(\rg,\bfh)} = \overline{Pq_0}$ for generic $q_0 \in \cQ \mathbb T_\gamma(\rg,\bfh)$.  
By Theorem \ref{thm:EMM}, $\overline{Pq_0}$ is the unit length locus of an AIS $\AIS$, and since $\overline{Pq_0}\supset \cQ \mathbb T_\gamma(\rg,\bfh)$, Corollary \ref{cor: identify AIS} implies $\AIS$ must be the entire ambient stratum.
\end{proof}

By Corollary \ref{cor:densetoriequid} and \eqref{eqn:pushtorigeo}, we have that these flat twist tori equidistribute to the Masur--Smillie--Veech probability measure $\nu_{\cQ}^1$ on the ambient stratum outside a set $Z$ of zero density:
\[\nu_{\gamma}(e^t \rg, e^{-t} \bfh) \xlongrightarrow{t \to \infty, \, t \not \in Z}
\nu_{\cQ}^1.\]
Applying Theorem \ref{thm: pull back equidistribution}, we therefore get that
\[\mu_{\gamma}(e^t \rg, e^{-t} \bfh) =
\cO^*\nu_{\gamma}(e^t \rg, e^{-t} \bfh)
\xlongrightarrow{t \to \infty, \, t \not \in Z}
\cO^* \nu_{\cQ}^1.\]
This is what we wanted to show.
\end{proof}

\section{Other twist tori}\label{sec:identifyother}

One of the pivotal steps in our proof of Theorem \ref{mainthm} relied on the fact that no flat surface can have more than $3g-3$ horizontal cylinders (Corollary \ref{cor: identify AIS}), which makes it seem like pants decompositions are special.
In this section, we demonstrate that this phenomenon is actually much more general.
Using more tools from Teichm{\"u}ller dynamics, we show the equidistribution of more families of flat twist tori to their ambient strata (and to other AISs), which implies the equidistribution of other hyperbolic twist tori via Theorem \ref{thm: pull back equidistribution}.

Apart from the identification of orbit closures, the proofs of the results in this section are identical to that of Theorem \ref{mainthm} and are therefore omitted.
The examples in this section are meant to be illustrative, not exhaustive --- we invite the reader to import their favorite techniques to this setting.

\subsection{Regular plumbing fixtures}\label{subsec:plumbing}
As a motivating example, let us consider the following family of generalizations of our problem. We emphasize, however, that all of the computations in this section are much more general and apply to pushforwards by the dilation flow of any of the tori defined in Section \ref{sec: hyp twist tori}.

For $p \ge 3$ and $L>0$, we define the \textbf{regular plumbing fixture with $p$ holes} to be the unique hyperbolic metric on a $p$-holed sphere with totally geodesic boundaries of length $L$ and an (orientation-preserving) dihedral symmetry group of order $2p$.
Equivalently, this is the surface corresponding to the ribbon graph obtained by gluing together $p$ intervals of length $L/2$ at their vertices.
see Figure \ref{fig:regular plumbing}. 

\begin{figure}[h!]
    \centering
    \includegraphics[width=0.4\linewidth]{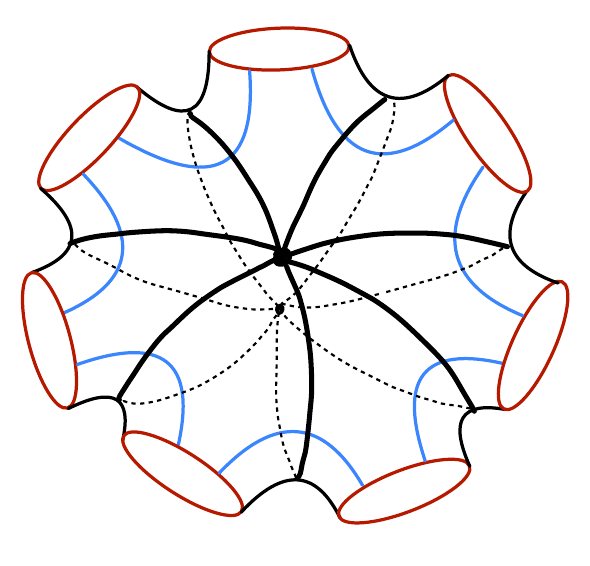}
    \caption{A regular plumbing fixture on a $7$-holed sphere.}
    \label{fig:regular plumbing}
\end{figure}

Suppose that $\gamma$ is a multicurve on $S$ such that each component $\Sigma_1, \ldots, \Sigma_k$ of $S \setminus \gamma$ has genus 0; let $p_i$ denote the number of boundary components of $\Sigma_i$.
Taking each component to be a regular plumbing fixture with boundary lengths $L$, we get a twist torus $\TT_\gamma(L) \subset \M_g$
whose corresponding metric ribbon graph spine $\mathsf{X} = \mathsf{X}(p_1, \ldots, p_k)$ has $k$ components, the $i^{\text{th}}$ of which has two vertices of valence $p_i$ and edges of length $L/2$.
Paralleling Conjecture \ref{conj:twist tori}, one can ask: what is the limiting distribution of these tori in $\M_g$ as $L \to \infty$?

For any choice of heights $\bfh$, the corresponding flat twist torus $\cQ \mathbb T_{\gamma}(\mathsf{X}, \bfh)$ parametrizes surfaces with cone points of angle
$p_1 \pi, p_1 \pi, p_2 \pi, p_2 \pi, \ldots, p_k \pi, p_k \pi$, so unless all $p_i=3$, the limiting distribution of the lifted hyperbolic twist tori $\cP \mathbb T_{\gamma}(\mathsf{X}, \bfh) \subset \PoM_g$ will not be Mirzakhani measure. Moreover, since the orbit closure of $\cQ \mathbb T_{\gamma}(\mathsf{X}, \bfh)$ is contained in a non-principal stratum, the possible limiting distributions on $\M_g$ may be qualitatively different from Mirzakhani measure. For example, the support of the measure may not be (coarsely) dense in $\PoM_g$ (compare \cite[Remark 1.3]{shshI}).

Whenever any $p_i$ is even, one can check that the differentials $q \in \cQ \mathbb T_{\gamma}(\mathsf{X}, \bfh)$ do not have the maximal number of cylinders for their ambient stratum.
For example, if all $p_i=4$ then $\# \gamma= 2g-2$, but we have seen in the previous section that there are differentials in the stratum $\cQ\M_g(2^{2g-2}; \varepsilon)$ with an entire pants decomposition worth of horizontal cylinders.
Thus we will need to employ different techniques.

\subsection{Jointly orientable constructions}\label{subsec:jo_construct}
We first focus on the case when $(\gamma, \Gamma)$ is jointly orientable, that is, when the twist torus lives inside a stratum of (squares of) abelian differentials.
Building on work of Mirzkakhani and Wright on ``full rank'' AISs \cite{MW_fullrank},
recent deep work of Apisa and Wright classifies the ``large'' AISs in these strata.

Following \cite{AW:high_rank}, say that an AIS $\AIS$ in a stratum of genus $g$ abelian differentials has {\bf high rank} if 
\[\rank(\AIS) \ge \frac{g}{2}+1.\]

\begin{theorem}[Theorem 1.1 of \cite{AW:high_rank}]\label{thm: high rank}
Any high rank AIS in a stratum of squares of abelian differentials is either a component of the stratum or the locus of all holonomy double covers of surfaces in a component of a stratum of quadratic differentials.
\end{theorem}

Given any twist torus $\cQ\TT_\gamma(\rg, \bfh)$ contained in such a stratum of squares of abelian differentials, the dimension of the projection of the tangent space of the twist torus to absolute cohomology (as in Proposition \ref{prop:twistspace lowerbd}) is just the dimension of the span of the curves of $\gamma$ in homology.
Thus, so long as $\gamma$ has at least $g/2 + 1$ homologically independent curves, the $P$ orbit closure of $\cQ\TT_\gamma(\rg, \bfh)$ is the unit area locus in either the ambient stratum or a locus of double covers.

Combining this Theorem together with the argument from Section \ref{subsec:proof of mainthm}, {\em mutatis mutandis}, immediately yields a result for jointly orientable gluings of regular plumbing fixtures.

\begin{theorem}\label{thm: plumbingAD}
Let $\gamma$ be a multicurve on a surface $S$ such that $S \setminus \gamma$ is a union of $p_1, \ldots, p_k$-holed spheres, where all $p_i$ are even. 
Fix $L > 0$ and consider the twist tori in $\M_g$ obtained by gluing together $p_i$-holed regular plumbing fixtures with all boundary lengths $L$ along $\gamma$.
Suppose furthermore that $(\gamma, \mathsf{X})$ is jointly orientable, where $\mathsf{X} = \mathsf{X}(p_1, \ldots, p_k)$ is the ribbon graph spine for the regular plumbing fixtures.
Fix any choice of heights $\bfh$ such that $\sum h_i = 1/L$.
There is a set of times $Z$ of zero density such that
\[\mu_{\gamma}(e^t\mathsf{X},e^{-t}\bfh)
\xlongrightarrow{t \to \infty, \, t \not \in Z}
\cO^*\nu_{\AIS}^1
\text{ as measures on }
\PoM_g.\]
\begin{enumerate}
    \item If $S$ admits an involution fixing each of the curves of $\gamma$ setwise but reversing their orientations (hence $k\le 2$), then $\AIS$ is the hyperelliptic locus in the component of the stratum 
    \[\QM_g\left((p_1-2)^{2}, \ldots, (p_k-2)^2; +1\right).\]
    \item Otherwise, $\AIS$ is the entire component of the ambient stratum containing $\cQ\TT_{\gamma}(\mathsf{X}, \bfh)$.
\end{enumerate}
\end{theorem}

To identify the $P$-orbit closure of the twist tori in this theorem, it suffices to use the full rank result of Mirzakhani--Wright \cite{MW_fullrank}, rather than Theorem \ref{thm: high rank}.

\begin{proof}
We note that since $\gamma$ cuts $S$ into surfaces without genus, it spans a $g$-dimensional subspace of homology. In particular, the orbit closure of the twist torus is high (in fact, full) rank.
The only thing that remains to be justified is why no other loci of double covers of quadratic differentials can appear.
For this to be the case, $\gamma$ must be setwise invariant under some orientation-preserving involution $\iota$ of $S$.
Unless $\iota$ is hyperelliptic, there are two curves of $\gamma$ which must be interchanged. Twisting those curves independently is not $\iota$-invariant, hence the tangent space to the twist torus is not $\iota$-invariant.
\end{proof}

This proof is clearly much more general: one can pick any metric ribbon graph $\mathsf{X}$ such that $(\gamma, \mathsf{X})$ is jointly orientable and get the same result.
See Figure \ref{fig:minimal stratum} for examples that land in the minimal strata.

\begin{remark}
Unlike Theorem \ref{mainthm}, the gluing pattern of $\gamma$ can determine the limiting measure, as some strata $\cQ\M_g(\sing; +1)$ are disconnected. 
However, one can read off the parity of the induced spin structure from the data of $\gamma$ and $\mathsf{X}$ by computing winding numbers.
\end{remark}

\begin{figure}[h]
    \centering
    \includegraphics[width=\linewidth]{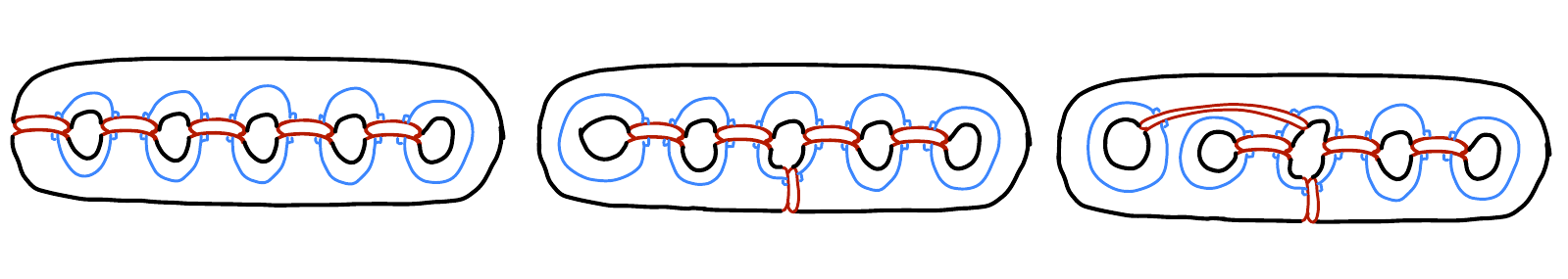}
    \caption{Twist tori corresponding to the minimal stratum of squares of abelian differentials. The drawn arc systems are dual to ribbon graphs each with a unique vertex of valence $4g-2$. 
    The leftmost torus lands in the hyperelliptic locus, while the other two land in the odd and even spin components (which diagram corresponds to which spin depends on the parity of $g$ mod $4$, see \cite[\S6.3]{strata2}).}
    \label{fig:minimal stratum}
\end{figure}

Using more specific results from Teichm{\"u}ller dynamics, we can identify the limiting distributions of more twist tori.
We mention in particular the following result of Winsor \cite{Winsor}, which itself admits further generalizations.

\begin{theorem}[Theorem 1.3 of \cite{Winsor}]\label{thm:Winsor}
Suppose $\AIS$ is an AIS in a component $\cH$ of a stratum of squares of abelian differentials.
Suppose further there is some $\omega \in \AIS$ with a set of three horizontal cylinders whose core curves bound a pair of pants, and that cylinder deformations on each of these cylinders remains in $\AIS$. Then $\AIS = \cH$.
\end{theorem}

Consequently, if $\gamma$ is any multicurve such that some component of $S \setminus \Gamma$ is a pair of pants, and $\rg \in \T(S \setminus \gamma)$ is any metric ribbon graph such that $(\gamma, \rg)$ is jointly orientable, then the twist tori
$\cP\TT_\gamma(e^t \rg, e^{-t}\bfh)$ equidistribute outside a set of zero density to the pullback of Masur--Smillie--Veech measure coming from their ambient stratum.

\subsection{Lifting homologies}
When $(\gamma, \rg)$ is not jointly orientable we have fewer tools at our disposal, as the classification of orbit closures in quadratic strata is not as far along as the abelian case (for example, there is no known analogue of Theorem \ref{thm:Winsor}).
However, we can still apply Theorem \ref{thm: high rank} in many situations.

In order to do this, let us record some numerical invariants of quadratic doubles.
Let $\cQ$ be a component of a stratum $\QM_g(\sing;-1)$ of genus $g$ holomorphic quadratic differentials, where $\sing$ is a partition of $4g-4$.
Let $\singodd$ denote those elements of $\sing$ that are odd and likewise define $\sing_{\textrm{even}}$.

\begin{lemma}\label{lem: RH computations}
Let $\cQ\subset \cQ\M_g(\sing;-1)$ be a component of a stratum and let 
$\widehat \cQ \subset \cQ\M_{\widehat g}(\widehat \sing;+1)$
be the AIS consisting of holonomy double covers of surfaces in $\cQ$.
Then
    \[\widehat g = 2g +\frac12\#\singodd-1,
    \hspace{4ex}
    \rank(\widehat\cQ) = g+ \frac12 \#\singodd-1,
    \hspace{4ex}
    \text{and}
    \hspace{4ex}
    \rel(\widehat\cQ) = \#\sing_{\textrm{even}}.
    \]
\end{lemma}

\begin{proof}
    This is \cite[Lemma 4.2]{AWmarkedpts}, but we give some details for completeness; see also \cite[Theorem 26]{BW:symplectic} for a related computation in the context of train tracks.
    The expression for $\widehat g$ can be computed using the Riemann--Hurwitz formula for the holonomy double cover $\hat q \to q$ branched over the zeros of $q \in \QM(\sing;-1)$.
    To compute rank, subtract the complex dimension of the kernel of the map to absolute cohomology from the complex dimension of $\QM(\sing;-1)$ and divide by two.
    Since $\dim \QM(\sing;-1) = 2g-2 + \#\sing$ and the kernel of the map to absolute cohomology is generated by certain relative cycles associated to the even order zeros of any $q\in \cQ$, the formulas follow.     
\end{proof}
    
\begin{remark}\label{rmk: 6 odd zeros}
We note that the locus of double covers $\widehat \cQ$ is only high rank in $\cQ\M_{\widehat g}(\widehat \sing;+1)$ once $\singodd \ge 6.$
\end{remark}

Proposition \ref{prop:twistspace lowerbd} gives a lower bound for the rank of an AIS $\AIS$ in terms of cylinder deformations.
When $\AIS$ contains the entire twist torus $\cQ\TT(\rg, \bfh)$, this dimension can be computed explicitly.

\begin{proposition}\label{prop: number of relations}
Let $\cQ$ be a component of $\QM_g(\sing; -1)$.
Let $q \in \cQ$ be horizontally periodic with horizontal multicurve $\gamma$ and let $\Gamma$ denote the ribbon graph of horizontal separatrices on $q$. Then
\[\dim_{\RR} p(\Twist(\hat q, \widehat\cQ)) = \#\gamma - N_{o}(\Gamma),\]
where $N_{o}(\Gamma)$ is the number of orientable components of $\Gamma$.
\end{proposition}
\begin{proof}
The basic idea is that cylinder deformations are dual to their core curves, and each subsurface of $S\setminus \gamma$ defines a linear relation between its boundary curves in the homology of $S$.
These relations lift to relations between the curves of $\widehat \gamma$ in the homology double cover $\widehat S$ exactly when that subsurface is disconnected in $\widehat S$.
The main technical difficulty is just to ensure that there are no other unaccounted-for relations.

Let us begin by setting some notation.
Enumerate the curves of $\gamma$ as $\gamma_1, \ldots, \gamma_n$.
The linear holonomy along the core curve of a horizontal cylinder is trivial, so we know the preimage of each $\gamma_i$ in the holonomy cover $\hat q$ has two components.
Pick a lift $\overline \gamma_i \subset \hat q$, so the full preimage is
$\overline \gamma_i \sqcup \iota (\overline \gamma_i)$, where $\iota$ denotes the covering involution. 
Choose an orientation on each $\gamma_i \subset S$ and orient $\overline \gamma_i$ so that it maps with degree $1$ to $\gamma_i$.
Push that orientation forward by $\iota$ to induce an orientation on $\iota(\overline \gamma_i)$ and define 
\[\widehat\gamma_i = \overline \gamma_i -\iota_*\overline \gamma_i \in H_1(\hat q\setminus Z(\hat q); \ZZ).\]
Up to scale, these are the Poincar{\'e}-Lefschetz duals of the lifts of cylinder deformations of $q$ to $\hat q$, and satisfy $\iota_* \widehat \gamma_i = - \widehat \gamma_i$.
See Figure \ref{fig: orientation cover}.

The inclusion induces a natural map $i_*:H_1(\hat q \setminus Z(\hat q);\ZZ) \to H_1(\hat q;\ZZ)$ and duality makes the following square commute (with any coefficients):
\[
\begin{tikzcd}
    H^1(\hat q,Z(\hat q))^- \arrow[r,leftrightarrow] \arrow[d,"p"]
    & H_1(\hat q \setminus Z(\hat q))^- \arrow[d,"i_*"] \\
  H^1(\hat q)^- \arrow[r,leftrightarrow]
&  H_1(\hat q)^- \end{tikzcd}\]
Thus, in order to prove the Proposition it is equivalent to show
\[
\dim_\RR
i_*\langle \widehat\gamma_1, ..., \widehat\gamma_n\rangle = n - N_{co}.\]
For brevity, and to avoid confusion with the action $\iota_*$ of the covering involution on the homology of $\widehat S$, we will henceforth drop the $i_*$ and write $\widehat \gamma_i \in H_1(\widehat S;\RR)^-$.  Also, let $V =i_*\langle \widehat\gamma_1, ..., \widehat\gamma_n\rangle$.

Enumerate the subsurfaces $S\setminus \gamma = \Sigma_1\cup  ... \cup\Sigma_k$, and let $\widehat \Sigma_j$ be the full preimage of $\Sigma_j$ under $\widehat S \to S$.
Let $\Gamma_j$ denote the component of $\Gamma$ corresponding to $\Sigma_j$, and observe that a subsurface $\widehat \Sigma_j$ is disconnected if and only if $\Gamma_j$ is orientable (as in the proof of Lemma \ref{lem: co-orient}).
Consider annular neighborhoods $A_i$ of $\gamma_i \subset S$ and let $\widehat A_i$ be their full preimages.

Now consider the Mayer--Vietoris long exact sequence (with coefficients in $\RR$, say)
    \begin{equation}\label{eqn: long exact 1}
        0 \to H_2(\widehat S) \to \oplus_{i,j} H_1(\widehat A_i \cap \widehat \Sigma_j) \to \left( \oplus_{i =1}^n H_1 (\widehat A_i) \right) \bigoplus \left(\oplus_{j = 1}^k H_1(\widehat \Sigma_j)\right) \to H_1(\widehat S) \to ...
    \end{equation}
    By naturality of $\iota_*$, this long exact sequence is the sum of two long exact sequences corresponding to the $\iota$-invariants and the $\iota$-anti-invariants, and we are interested in the latter.

Since $\iota$ is orientation-preserving and $\widehat S$ is closed, we have $H_2(\widehat S)^-= 0$.
By definition, $H_1(\widehat A_i)^- = \langle \widehat \gamma_i \rangle $.
Let $\gamma_i^+$ and $\gamma_i^-$ be curves parallel to $\gamma_i$ obtained by pushing off to the right and left, respectively; see Figure \ref{fig: orientation cover}.
For any fixed $i$, we have that $\bigcup_{j} \widehat A_i \cap \widehat \Sigma_j$ is a union of four cylinders, the cores of which correspond to $\overline\gamma_i^{\pm}$ and $\iota_*\overline\gamma_i^{\pm}$.
Thus $\oplus_{i,j} H_1(\widehat A_i \cap \widehat \Sigma_j)^-$ is freely generated by $\{\widehat \gamma_i^\pm: i = 1, ..., n\}$.

\begin{figure}[ht]
        \centering
        \includegraphics[width=0.7\linewidth]{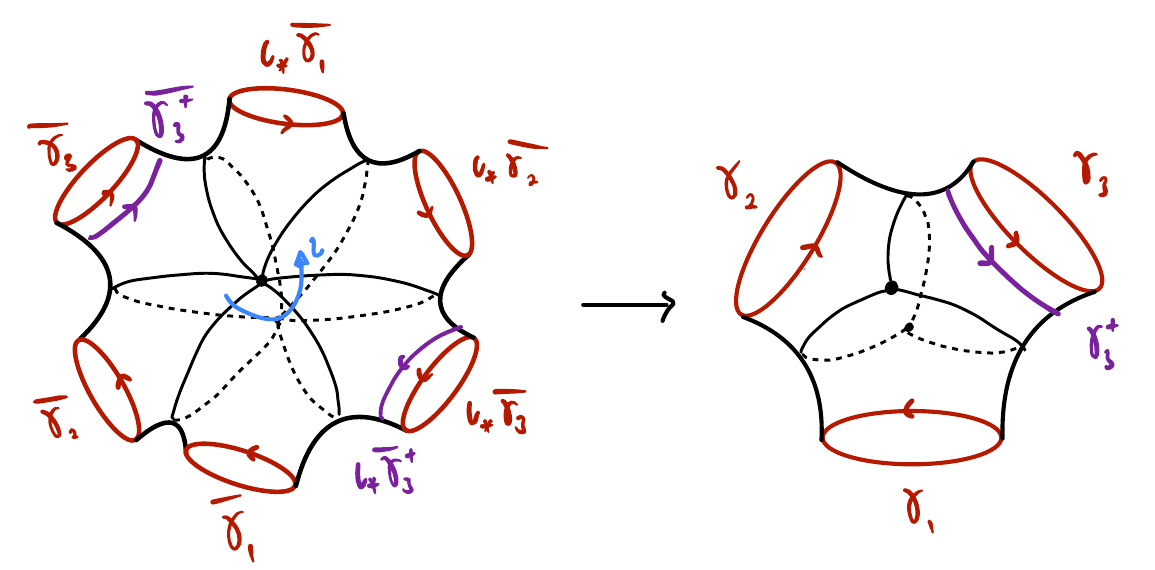}
        \caption{A non-trivial cover $\widehat \Sigma \to \Sigma$ corresponding to a non-orientable ribbon graph. We have $\widehat\gamma_i = \overline \gamma_i - \iota_* \overline \gamma_i$ and the push-off to the right is $\widehat \gamma_i^+ = \overline \gamma_i^+ - \iota_* \overline \gamma_i^+$.
        }
        \label{fig: orientation cover}
    \end{figure}
    
Since each $\widehat \Sigma_j$ is the total preimage of $\Sigma_j$, it is invariant under the covering involution $\iota_*$.
Hence the anti-invariance distributes over the sum in \eqref{eqn: long exact 1}, yielding
    \begin{equation}\label{eqn: long exact 2}
        0 \to  \langle \widehat\gamma_i^\pm \rangle  \xrightarrow{F} \left( \oplus_{i =1}^n \langle \widehat \gamma_{i} \rangle \right) \bigoplus \left(\oplus_{j = 1}^k H_1(\widehat \Sigma_j)^-\right) \xrightarrow{G} H_1(\widehat S)^- \to ...,
    \end{equation}
    where $F :\widehat\gamma_i^\pm \mapsto(\widehat \gamma_i, - \widehat \gamma_{i}^\pm)$ and $G$ sums the components together.
    Note $\dim (\im F )= 2n$ since \eqref{eqn: long exact 2} is exact.
    \medskip

    We now use this sequence to compute the dimension of $V$. We begin by observing that $\im F\subset G\inverse(V)$, so that $V\cong G\inverse (V) /\im F $. 
    From the definition of $V$, we see 
    $G\inverse(V)$ is generated by the $3n$ homology classes $\{\widehat\gamma_i^-, \widehat\gamma_i, \widehat\gamma_i^+\}_{i = 1}^n$.  The classes $\{\widehat\gamma_i^\pm\}_{i = 1}^n$ may satisfy some relations in $\oplus H_1(\widehat \Sigma_j)^-$, which we now compute.

    Each  $\widehat\Sigma_j$ is a surface with an even number $m_j$ of boundary components 
    \[\{c_1, ..., c_{m_j}\} \cup \{\iota_*c_1, ..., \iota_*c_{m_j}\},\]
    where $\{c_i\}\subset \{ \pm \overline \gamma_i^\pm\}$.
    We assume without loss of generality that the orientations are chosen so that 
    \[\sum c_i + \sum \iota_*c_i= 0 \in H_1(\widehat \Sigma_j)\]
    holds.
    If $\widehat\Sigma_j$ is connected, then this is the only non-trivial relation satisfied by $\{c_i\} \cup \{\iota_* c_i\}$, while if $\Sigma_j$ is disconnected, then  additionally
    \[\sum c_i = 0 = \sum \iota_* c_i \in H_1(\widehat \Sigma_j).\]
    In particular, 
    \[ \{c_i -\iota_* c_i: {i = 1, ..., m_j} \} \subset H_1(\widehat \Sigma_j)^- \]
    is a linearly independent set if and only if $\widehat \Sigma_j$ is connected and otherwise spans a subspace of dimension $m_j-1$.
    By definition, there are $N_{o}(\Gamma)$ disconnected $\widehat\Sigma_j$, so that $\dim G\inverse(V) = 3n- N_{o}(\Gamma)$.
    Putting this all together, $V \cong G\inverse (V) /\im F$ has dimension
    \[\dim G\inverse (V) - \dim (\im F) = (3n - N_{o}(\Gamma)) - 2n = n - N_{o}(\Gamma),\]
    completing the proof of the Proposition.
\end{proof}

Combining Lemma \ref{lem: RH computations} and Proposition \ref{prop: number of relations} gives a criterion for when an AIS $\AIS$ containing a twist torus must have high rank.
Pugging this into Theorem \ref{thm: high rank} yields the following:

\begin{corollary}\label{cor: high rank QD identification}
Let $\cQ \mathbb T_\gamma(\rg,\mathbf{h}) \subset \QM_g(\sing;-1)$ be a flat twist torus parametrizing quadratic differentials which are not squares of abelian differentials.
If
\[\# \gamma \ge g+\frac14 \#\singodd +N_{o}(\rg) + \frac{1}{2},\]
then the only AIS containing $\cQ \mathbb T_\gamma(\rg,\mathbf{h})$ is the ambient stratum component.
\end{corollary}

Applying this to our motivating example, we get that twist tori constructed out of regular plumbing fixtures generally equidistribute to the pullbacks of Masur--Smillie--Veech probability measures.
We note at least $3$ of the $p_i$ must be odd in order to have high rank (see Remark \ref{rmk: 6 odd zeros}), in which case we know that $\cQ$ must be connected \cite{Lanneau, CMexceptional}.

In the following two results, the proof follows along the same lines as in \S\ref{subsec:proof of mainthm}, we just check that we can apply Corollary \ref{cor: high rank QD identification}.

\begin{theorem}\label{thm: plumbingQD}
Consider any gluing of regular plumbing fixtures with boundary lengths $L$, as in Section \ref{subsec:plumbing}.
Suppose that at least 3 of the $p_i$ are odd and that the pair $(\gamma, \mathsf X)$ is not jointly orientable.
Fix any choice of heights $\bfh$ such that $\sum h_i = 1/L$. 
There is a set of times $Z$ of zero density such that
\[
\mu_{\gamma}(e^t\mathsf{X},e^{-t}\bfh)
\xlongrightarrow{t \to \infty, \, t \not \in Z}
\cO^*\nu_{\cQ}^1
\text{ as measures on }
\PoM_g,
\]
where $\nu_{\cQ}^1$ is the Masur--Smillie--Veech measure on $\QoM_g\left((p_1-2)^{2}, \ldots, (p_k-2)^2; -1\right)$.
\end{theorem}
\begin{proof}
Let $p_{\text{odd}}$ and $p_{\text{even}} = k - p_{\text{odd}}$ denote the number of $p_i$ which are odd and even, respectively. Then:
\[\#\gamma = \frac{1}{2}\sum_{i=1}^k p_i,
\qquad
\# \singodd = 2 p_{\text{odd}},
\qquad
\text{and}
\qquad
N_o(\mathsf Y) = p_{\text{even}}.
\]
We also compute that $4g-4 = 2 \sum_{i=1}^k (p_i-2)$, and so we satisfy the hypotheses of the Corollary if and only if
\[\#\gamma \ge 
(\# \gamma - k + 1) + \frac{1}{2}p_{\text{odd}} + p_{\text{even}} + \frac 1 2,\]
which by arithmetic occurs if and only if $p_{\text{odd}} \ge 3$.
\end{proof}

We conclude with a substantial generalization of Theorem \ref{mainthm} which uses far fewer curves.

\begin{theorem}\label{thm:trivalent2g}
Suppose that $\gamma$ is a multicurve on a surface $S$ with $\#\gamma \ge 2g$.
Fix a trivalent $\rg \in \TRG(S \setminus \gamma)$ and pick any $\bfh$ such that $\bfl \cdot \bfh = 1$.
There is a set of times $Z$ of zero density such that
\[\mu_{\gamma}(e^t\rg,e^{-t}\bfh)
\xlongrightarrow{t \to \infty, \, t \not \in Z}
\mu_{\Mirz}/b_g
\text{ as measures on }
\PoM_g.\]
In particular, projecting down to $\M_g$, the twist tori
$\TT_\gamma(e^t \rg)$ for $t \not \in Z$ equidistribute to $\frac{B(X )}{b_g} d \vol_{\WP}(X)$.
\end{theorem}
\begin{proof}
    The condition that $\rg$ is trivalent gives that $\#\sing_{\text{odd}} = 4g-4$, and since no trivalent ribbon graph is orientable, $N_o(\rg)=0$.  Therefore, we have 
    \[g +\frac{1}{4}\#\sing_{\text{odd}} +N_o(\rg)+\frac12 = 2g-\frac12. \]
    Thus, if $\#\gamma\ge 2g$, Corollary \ref{cor: high rank QD identification} identifies the $P$-orbit closure of the corresponding flat twist torus.  
\end{proof}

\bibliography{references}{}
\bibliographystyle{amsalpha.bst}

\Addresses
\end{document}